\colorlet{darkgreen}{green!50!black}
\newtheorem{thm}{Theorem}
\newtheorem{prop}[thm]{Proposition}
\newtheorem{lem}[thm]{Lemma}
\renewcommand{\geq}{\geqslant}
\renewcommand{\leq}{\leqslant}
\newcommand{\citeep}[1]{\citeeauthor*{#1} (\citeeyear{#1}) \citeep{#1}}
\newcommand{\s}{\mathcal{S}}
\newcommand{\G}{\mathcal G_\mathcal R}
\newcommand{\R}{\mathcal{R}}
\let\phi=\varphi
\title[Tutte's invariant approach for Brownian motion reflected in the quadrant]{Tutte's invariant approach for Brownian motion reflected in the quadrant}
\author{S.\ Franceschi} \address{Laboratoire de Probabilit\'es et
        Mod\`eles Al\'eatoires, Universit\'e Pierre et Marie Curie,
        4 Place Jussieu, 75252 Paris Cedex 05, France
        \& Laboratoire de Math\'ematiques et Physique Th\'eorique, Universit\'e de Tours, Parc de Grandmont, 37200 Tours, France} \email{sandro.franceschi@upmc.fr}
\author{K.\ Raschel} \address{CNRS \& F\'ed\'eration de recherche Denis Poisson \& Laboratoire de Math\'ematiques et Physique Th\'eorique, Universit\'e de Tours, Parc de Grandmont, 37200 Tours, France} \email{Kilian.Raschel@lmpt.univ-tours.fr}
\keywords{Reflected Brownian motion in the quarter plane; Stationary distribution; Laplace transform; Tutte's invariant approach; Generalized Chebyshev polynomials}
\begin{document}
\maketitle

\date{\today}

\selectlanguage{english}
{
\begin{abstract}
We consider a Brownian motion with negative drift in the quarter plane with orthogonal reflection on the axes. The Laplace transform of its stationary distribution satisfies a functional equation, which is reminiscent from equations arising in the enumeration of (discrete) quadrant walks. We develop a Tutte's invariant approach to this continuous setting, and we obtain an explicit formula for the Laplace transform in terms of generalized Chebyshev polynomials.
\end{abstract}
}

\footnote{Version of \today}

\selectlanguage{english}

\section{Introduction and main results}
\label{sec:introduction}

\subsection{Reflected Brownian motion in the quadrant}
The object of study here is the reflected Brownian motion with drift in the quarter plane
\begin{equation}
\label{eq:RBMQP}
     Z(t)=Z_0 + W(t) + \mu t + R L(t),\qquad \forall t\geq 0,
\end{equation}
associated to the triplet $(\Sigma, \mu, R)$, composed of a non-singular covariance matrix, a drift and a reflection matrix, see Figure~\ref{fig:drift_reflection}:
\begin{equation*}
     \Sigma = \left(  \begin{array}{ll} \sigma_{11} & \sigma_{12} \\ \sigma_{12} & \sigma_{22} \end{array} \right),\qquad \mu= \left(  \begin{array}{l} \mu_1 \\  \mu_2  \end{array} \right), \qquad  R= (R^1,R^2)=\left(  \begin{array}{cc} r_{11} & r_{12} \\ r_{21} & r_{22} \end{array} \right).
\end{equation*}     
In Equation~\eqref{eq:RBMQP}, $Z_0$ is any initial point in $\mathbb R_+^2$, the process $(W(t))_{t \geq 0} $ is an unconstrained planar Brownian motion with covariance matrix $\Sigma$ starting from the origin, and for $i=1,2$, $L^i(t)$ is a continuous non-decreasing process, that increases only at time $t$ such that $Z^i(t)=0$, namely $\int_{0}^t \mathds{1}_{\{Z^i(s) \ne 0 \}} \mathrm{d} L^i(s)=0$, for all $t\geq 0$. The columns $R^1$ and $R^2$ represent the directions in which the Brownian motion is pushed when the axes are reached.

\begin{figure}[hbtp]
\centering
\includegraphics[scale=1]{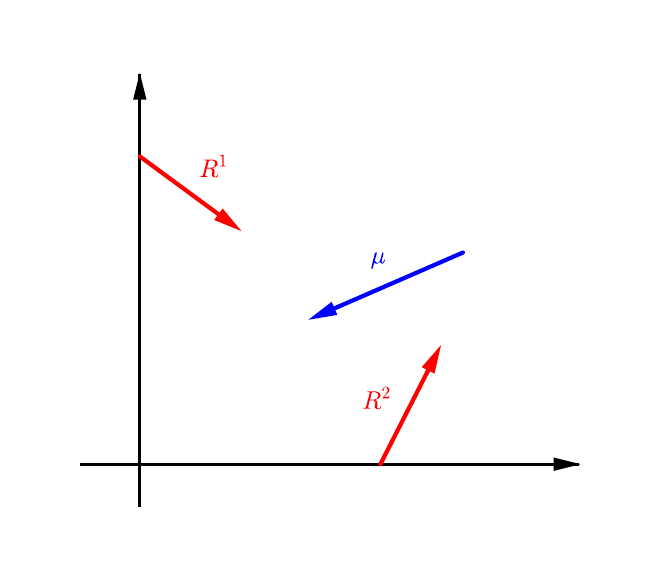}
\includegraphics[scale=1]{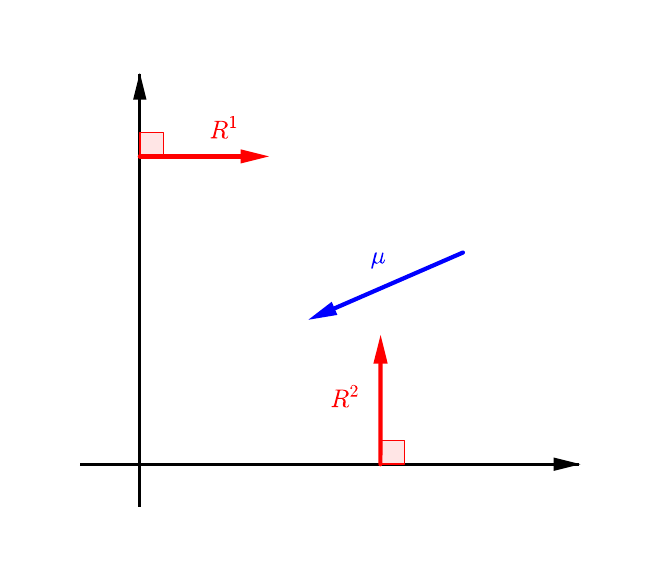}
\caption{Drift $\mu$ and reflection vectors $R^1$ and $R^2$ in non-orthogonal and orthogonal cases}
\label{fig:drift_reflection}
\end{figure}

The reflected Brownian motion $(Z(t))_{t\geq0}$ associated with $(\Sigma, \mu, R)$ is well defined \cite{harrison_multidimensional_1987,williams_semimartingale_1995}, and is a fondamental stochastic process, from many respects. There is a large literature on reflected Brownian motion in the quadrant (and also in orthants, generalization to higher dimension of the quadrant). First, it serves as an approximation of large queuing networks \cite{foddy_1984,Baccelli_Fayolle_1987}; this was the initial motivation for its study. In the same vein, it is the continuous counterpart of (random) walks in the quarter plane, which are an important combinatorial and probabilistic object, see \cite{BMMi-10,BeBMRa_2015}. In other directions, it is studied for its Lyapunov functions \cite{DuWi-94},  cone points of Brownian motion \cite{lega-87}, intertwining relations and crossing probabilities \cite{Du-04}, and of particular interest for us, for its recurrence or transience \cite{hobson_recurrence_1993}. Its stationary distribution exists and is unique if and only if the following (geometric) conditions are satisfied: 
\begin{equation}
\label{eq:CNS_ergodic}
     r_{11} > 0, \ \  r_{22} > 0, \ \ r_{11} r_{22} - r_{12} r_{21} > 0,\ \ r_{22} \mu_1 - r_{12}  \mu_2 < 0, \ \ r_{11} \mu_2 - r_{21}  \mu_1 < 0. 
\end{equation}
(For orthogonal reflections, \eqref{eq:CNS_ergodic} is equivalent for the drift $\mu$ to have two negative coordinates.) Moreover, the asymptotics of the stationary distribution (when it exists) is now well known, see \cite{harrison_reflected_2009,dai_reflecting_2011,franceschi_asymptotic_2016}. 

There exist, however, very few results giving an exact expression for the stationary distribution, and the main contribution of this paper is precisely to propose a method (based on boundary value problems) for deriving an explicit formula for the (Laplace transform of the) stationary distribution. Our study constitutes one of the first attempts to apply these techniques to reflected Brownian motion, after \cite{Foschini} (under the strong symmetry condition $\mu_1=\mu_2$, $\sigma_{11}=\sigma_{22}$ and symmetric reflection vectors $R^1,R^2$ in~\eqref{eq:RBMQP}), \cite{foddy_1984} (with the identity covariance matrix $\Sigma$) and \cite{Baccelli_Fayolle_1987} (with a diffusion having a quite special behavior on the boundary). We also refer to \cite{BuChMaRa_2015} for the analysis of reflected Brownian motion in bounded domains by complex analysis techniques.

\subsection{Laplace transform of the stationary distribution}
Under assumption~\eqref{eq:CNS_ergodic}, that we shall do throughout the manuscript, the stationary distribution is absolutely continuous w.r.t.\ the Lebesgue measure, see \cite{harrison_multidimensional_1987,dai_steady-state_1990}. We denote its density by $\pi(x)=\pi(x_1,x_2)$. Let the moment generating function (or Laplace transform) of $\pi$ be defined by
\begin{equation*}
     \varphi (\theta) = \mathbb{E}_{\pi} [e^{ \langle \theta \vert  Z\rangle}] = \iint_{{\mathbb R}_+^2} e^{\langle \theta \vert x \rangle} \pi(x) \mathrm{d} x.
\end{equation*}
The above integral converges at least for $\theta=(\theta_1,\theta_2) \in \mathbb{C}^2$ such that ${\Re}\,\theta_1\leq 0$ and ${\Re}\,\theta_2\leq 0$.
We further define two finite boundary measures $\nu_1$ and $\nu_2$ with support on the axes, by mean of the formula
\begin{equation*}
     \nu_i (B) = \mathbb{E}_{\pi} \bigg[ \int_0^1 \mathds{1}_{\{Z(t) \in B\}} \mathrm{d}L^i (t)\bigg].
\end{equation*}     
The measure $\nu_i$ is continuous w.r.t.\ the Lebesgue measure \cite{harrison_multidimensional_1987}, and may be viewed as the boundary invariant measure. We define the moment generating function of $\nu_1$ and $\nu_2$ by
\begin{equation*}
\varphi_2 (\theta_1) 
=\int_{{\mathbb R}_+} e^{\theta_1 x_1} \nu_2(x_1) \mathrm{d} x_1,
\qquad
\varphi_1 (\theta_2) 
=\int_{{\mathbb R}_+} e^{\theta_2 x_2} \nu_1(x_2) \mathrm{d} x_2.
\end{equation*}
The functions $\phi_1$ and $\phi_2$ exist {\it a priori} for values of the argument with non-positive real parts. There is a functional equation between the Laplace transforms $\phi$, $\phi_1$ and $\phi_2$, see~\eqref{eq:functional_equation} in Section~\ref{sec:analytic}, which is reminiscent of the functional equation counting (discrete) quadrant walks \cite{BMMi-10,BeBMRa_2015}.
    
\subsection{Main result}
We derive an explicit expression for $\phi_1$, and therefore also for $\phi_2$ and $\phi$ by the functional equation~\eqref{eq:functional_equation}, in the particular case where $R$ is the identity matrix, which means that the reflections are orthogonal (Figure~\ref{fig:drift_reflection}, right). Define the generalized Chebyshev polynomial by (for $a\geq 0$)
\begin{equation*}
T_a(x)  =\cos (a\arccos (x))=\frac{1}{2} \Big\{\big(x+\sqrt{x^2-1}\big)^a+\big(x-\sqrt{x^2-1}\big)^a\Big\}.
\end{equation*}
It admits an analytic continuation on $\mathbb C\setminus (-\infty,-1)$, and even on $\mathbb C$ if $a$ is a non-negative integer. 
We also need to introduce 
\begin{equation}
\label{eq:definition_theta_2_pm}
     \theta_2^\pm = \frac{(\mu_1\sigma_{12}-\mu_2\sigma_{11}) \pm \sqrt{(\mu_1\sigma_{12}-\mu_2\sigma_{11})^2 +\mu_1^2 \det{\Sigma}}}{\det{\Sigma}} 
\end{equation}
(notice that the sign of $\theta_2^\pm$ is $\pm$, see Figure~\ref{BVPtheta}), as well as the angle (related to the correlation coefficient of the Brownian motion $(W(t))_{t\geq0}$)
\begin{equation}
\label{eq:definition_beta}
     \beta= \arccos \left({-\frac{\sigma_{12}}{\sqrt{\sigma_{11}\sigma_{22}}}}\right).
\end{equation}

\begin{thm} 
\label{thm:main}
Let $R$ be the identity matrix in~\eqref{eq:RBMQP}. The Laplace transform $\phi_1$ is equal to 
\begin{equation*}
     \phi_1 (\theta_2)= \frac{-\mu_1 {w}'(0)}{{w}(\theta_2)-{w}(0)}\theta_2,
\end{equation*}
where the function $w$ can be expressed in terms of the generalized Chebyshev polynomial $T_{\frac{\pi}{\beta}}$ as follows:
\begin{equation*}
     {w} (\theta_2)=T_{\frac{\pi}{\beta}}\bigg(-\frac{2\theta_2-(\theta_2^++\theta_2^-)}{\theta_2^+-\theta_2^-}\bigg).
\end{equation*}
Accordingly, $\phi_1$ can be continued meromorphically on the cut plane $\mathbb{C}\setminus (\theta_2^+,\infty)$.
\end{thm}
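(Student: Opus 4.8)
The plan is to follow the classical "Tutte invariant" strategy adapted to the continuous setting. First I would write down the functional equation~\eqref{eq:functional_equation} relating $\phi$, $\phi_1$, $\phi_2$, which on the kernel (the zero set of the quadratic form $\frac{1}{2}\langle\Sigma\theta,\theta\rangle + \langle\mu,\theta\rangle$) degenerates, for $R=\mathrm{Id}$, to a relation of the form $\mu_1\theta_2\,\phi_1(\theta_2) + \mu_2\theta_1\,\phi_2(\theta_1) = 0$ valid whenever $(\theta_1,\theta_2)$ lies on the kernel curve. Solving the quadratic in $\theta_1$ gives two branches $\Theta_1^\pm(\theta_2)$ with a branch point at $\theta_2^\pm$ (this is exactly where~\eqref{eq:definition_theta_2_pm} comes from), and symmetrically two branches $\Theta_2^\pm(\theta_1)$. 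The standard argument then shows that $\phi_1$ extends analytically across the branch cut and satisfies, on a suitable closed curve $\mathcal R$ (the image of a vertical line under one of the branches, whose shape is governed by the angle $\beta$ in~\eqref{eq:definition_beta}), a boundary condition of the form $\phi_1(\Theta_2^+(\theta_1)) = \phi_1(\Theta_2^-(\theta_1))$ — i.e. $\phi_1$ composed with the conformal gluing data is "invariant."

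Next I would construct the conformal map / invariant function $w$. The curve $\mathcal R$ bounds a domain that is conformally a disk, and the Galois-type involution exchanging the two branches $\Theta_2^\pm$ acts on it; an \emph{invariant} is by definition a function holomorphic in the domain, continuous up to the boundary, that is unchanged by this involution. Because the opening angle of the relevant cone is exactly $\beta$ and the branch points are at $\theta_2^\pm$, one checks that the explicit candidate
\[
w(\theta_2) = T_{\pi/\beta}\!\left(-\frac{2\theta_2-(\theta_2^++\theta_2^-)}{\theta_2^+-\theta_2^-}\right)
\]
does the job: the affine map inside normalizes $[\theta_2^-,\theta_2^+]$ to $[-1,1]$, the arccosine unfolds the cone of angle $\beta$ onto a half-plane, multiplication by $\pi/\beta$ rescales it to a full plane, and $\cos$ glues the two sheets together — so $w$ is holomorphic on $\mathbb C\setminus(\theta_2^+,\infty)$, real on the cut, and invariant under the branch involution. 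I would verify the invariance either directly (via the reflection principle across the cut, using that $T_{\pi/\beta}$ is real-analytic there) or by the monodromy/uniformization argument that the pair $(\Theta_1^\pm,\Theta_2^\pm)$ parametrizes an elliptic-like or rational curve on which $w$ descends to a coordinate.

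With an invariant $w$ in hand, the endgame is the usual one: $\phi_1(\theta_2)$ and a second, explicitly known invariant of the same problem differ by a function that is invariant \emph{and} has the right pole structure, forcing it to be a Möbius function of $w$. Concretely, $\phi_1(\theta_2)(w(\theta_2)-w(0))$ is holomorphic on $\mathbb C\setminus(\theta_2^+,\infty)$, bounded at infinity (using the a priori growth bound on $\phi_1$ and that $w$ grows polynomially), and its boundary values on the cut are real — a Liouville-type argument then shows it is affine in $\theta_2$; matching the known value $\phi_1(0)$ (total mass, computable from the stationarity/BAR relation, yielding the $-\mu_1$ and the normalization) and the behavior near $\theta_2=0$ (where the simple zero of $w(\theta_2)-w(0)$ must be cancelled, since $\phi_1(0)$ is finite, pinning the linear factor to be $\propto\theta_2$) gives
\[
\phi_1(\theta_2) = \frac{-\mu_1\,w'(0)}{w(\theta_2)-w(0)}\,\theta_2,
\]
and the meromorphic continuation to $\mathbb C\setminus(\theta_2^+,\infty)$ is immediate since the right-hand side manifestly has that property (the only singularities are poles at the zeros of $w(\theta_2)-w(0)$). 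The main obstacle I expect is not the algebra but the analytic bookkeeping of Step 1–2: proving rigorously that $\phi_1$ continues across $\theta_2^+$ and that the boundary curve $\mathcal R$ is a genuine Jordan curve with the claimed angle, so that the invariant-function machinery (existence, uniqueness up to Möbius transformations) applies — this requires a careful study of the branches $\Theta_2^\pm$ and their analytic continuation, exactly as in the discrete Fayolle–Iasnogorodski–Malyshev theory but with the kernel now a conic rather than a biquadratic.
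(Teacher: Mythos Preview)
Your overall plan is the paper's plan, but there is a concrete misstep that breaks the endgame. On the kernel $\gamma(\theta_1,\theta_2)=0$ with $R=I$ one has $\gamma_1(\theta)=\theta_1$ and $\gamma_2(\theta)=\theta_2$, so the functional equation reduces to
\[
\theta_1\,\phi_1(\theta_2)+\theta_2\,\phi_2(\theta_1)=0,
\]
not to $\mu_1\theta_2\,\phi_1+\mu_2\theta_1\,\phi_2=0$. Consequently, eliminating $\phi_2$ at the two points $\Theta_2^\pm(\theta_1)$ gives
\[
\frac{\phi_1(\Theta_2^+(\theta_1))}{\Theta_2^+(\theta_1)}=\frac{\phi_1(\Theta_2^-(\theta_1))}{\Theta_2^-(\theta_1)},
\]
so the function that satisfies the boundary condition $\psi(\overline{\theta_2})=\psi(\theta_2)$ on $\mathcal R$ is $\psi_1(\theta_2)=\phi_1(\theta_2)/\theta_2$, \emph{not} $\phi_1$ itself. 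This is exactly why the factor $\theta_2$ appears in the final formula, and it must be built in from the start rather than recovered at the end by an ``affine'' Liouville step.

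Because you work with the wrong invariant, your concluding argument does not go through. The quantity $\phi_1(\theta_2)\,(w(\theta_2)-w(0))$ is not invariant under the involution (only $\psi_1\,(w-w(0))$ is), so there is no reason for its boundary values on the cut to be real, and even if they were, reflection plus Liouville would give a constant, not an affine function. The paper's route is cleaner once $\psi_1$ is identified: both $\psi_1$ and $w'(0)/(w-w(0))$ are invariants on the domain $\mathcal G_{\mathcal R}$ bounded by the hyperbola $\mathcal R$ (which is unbounded, not a closed Jordan curve), each with a simple pole at $0$; subtracting the right multiple removes the pole, and an invariant analytic on $\mathcal G_{\mathcal R}$ that vanishes at infinity is identically zero (this is the paper's Invariant Lemma). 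The value $\phi_1(0)=-\mu_1$ is then read off from~\eqref{eq:functional_equation} by setting $\theta_2=0$ and then $\theta_1=0$. The meromorphic continuation to $\mathbb C\setminus(\theta_2^+,\infty)$ is a corollary of the explicit formula, not an input to the argument.
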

There exists, of course, an analogous expression for $\phi_2(\theta_1)$, and the functional equation~\eqref{eq:functional_equation} finally gives a simple explicit formula for the bivariate Laplace transform $\phi$.

Let us now give some comments around Theorem~\ref{thm:main}.
\begin{itemize}
     \item It connects two {\it a priori} unrelated objects: the stationary distribution of reflected Brownian motion in the quadrant and a particular special function, viz, a generalized Chebyshev polynomial (which is a hypergeometric function). The expression that we obtain is quite tractable: as an example we will recover the well-known case of one-dimensional reflected Brownian motion (Section~\ref{subsec:diagonal_covariance}).
     \item To prove Theorem~\ref{thm:main}, we apply a constructive (and combinatorial in nature) variation of the boundary value method of \cite{fayolle_random_1999}, recently introduced in \cite{BeBMRa_2015} as Tutte's invariant approach \cite{Tutte-95}, see our Section~\ref{sec:BVP}. This paper is one of the first attempts to apply boundary value techniques to (continuous) diffusions in the quadrant, after \cite{foddy_1984} (which concerns very particular cases of the covariance matrix, essentially the identity matrix) and \cite{Baccelli_Fayolle_1987} (on diffusions with completely different behavior on the boundary). 
     \item The recent paper \cite{franceschi_asymptotic_2016} obtains the exact asymptotic behavior of the stationary distribution along any direction in the quadrant. The constant in that asymptotics involves both functions $\phi_1$ and $\phi_2$ (see \cite{franceschi_asymptotic_2016}), and can thus be made explicit with our Theorem~\ref{thm:main}.
     \item Theorem~\ref{thm:main} is well suited for asymptotic analysis: we shall derive the asymptotics of the stationary distribution along the axes, by using classical arguments from singularity analysis of Laplace transforms (see the reference book \cite{Doetsch}). See our Section~\ref{sec:singularity}.
     \item Theorem~\ref{thm:main} implies that the Laplace transform is algebraic if and only if a certain group (to be properly introduced later on) is finite, see Section~\ref{subsec:group}. This result has an analogue in the discrete setting, see \cite{BMMi-10}. In the same vein, the authors of \cite{DiMo-09} give necessary and sufficient conditions for the stationary density of two-dimensional reflected Brownian motion with negative drift in a wedge to have the form of a sum of exponentials. The intersection with our results is the polynomial case $\frac{\pi}{\beta}\in\mathbb Z$.
     \item More that the Brownian motion in the quadrant, all results presented here (including Theorem \ref{thm:main}) concern the Brownian motion in two-dimensional cones (by a simple linear transformation of the cones). This is a major and interesting difference of the continuous case in comparison with the discrete case, which also illustrates that the analytic approach is very well suited to that context. 
\end{itemize}

Though being self-contained (this is one of the reasons why we focus on the case of orthogonal reflections), this paper is part of a larger project, dealing with any reflection matrix $R$ (as in Figure~\ref{fig:drift_reflection}, left). Let us finally mention that an extended abstract of this paper and of \cite{franceschi_asymptotic_2016} may be found in \cite{EA-AofA2016}.

\subsection*{Acknowledgements}
We thank Mireille Bousquet-M\'elou, Irina Kurkova and Marni Mishna for interesting discussions. We also thank an anonymous referee for her/his careful reading and her/his suggestions. Finally, we acknowledge support from the projet R\'egion Centre-Val de Loire (France) MADACA and from Simon Fraser University, Burnaby, BC (Canada).

\section{Analytic preliminaries and continuation of the Laplace transforms}
\label{sec:analytic}

In this section we state the key functional equation (a kernel equation, see Section~\ref{subsec:functional_equation}), which is the starting point of our entire analysis. We study the kernel (a second degree polynomial in two variables) in Section~\ref{subsec:kernel}. Finally, we continue the Laplace transforms to larger domains (Section~\ref{subsec:continuation}), which will be used in Section~\ref{sec:BVP} to state a boundary value problem (BVP).

\subsection{A kernel functional equation}
\label{subsec:functional_equation}
We have the following key functional equation between the Laplace transforms:
\begin{equation}  
\label{eq:functional_equation}
     -\gamma (\theta) \varphi (\theta) =\gamma_1 (\theta) \varphi_1 (\theta_2) + \gamma_2 (\theta) \varphi_2 (\theta_1),
\end{equation}
where
\begin{align*}
  \begin{cases}
     \phantom{{}_1}\gamma (\theta)= \frac{1}{2} \langle \theta \vert  \sigma \theta \rangle+ \langle \theta \vert  \mu \rangle =\frac{1}{2}(\sigma_{11} \theta_1^2 + \sigma_{22} \theta_2^2 +2\sigma_{12}\theta_1\theta_2)
+
\mu_1\theta_1+\mu_2\theta_2,  \\
     \gamma_1 (\theta)= \langle R^1 \vert  \theta \rangle=r_{11} \theta_1 + r_{21} \theta_2,  \\
     \gamma_2 (\theta)=\langle R^2 \vert  \theta \rangle=r_{12} \theta_1 + r_{22} \theta_2.
  \end{cases}
\end{align*}
By definition of the Laplace transforms, this equation holds at least for any $\theta=(\theta_1, \theta_2)$ with ${\Re}\,\theta_1\leq 0$ and ${\Re}\, \theta_2\leq 0$.  

To prove \eqref{eq:functional_equation} the main idea is to use an identity called basic adjoint relationship (first proved in \cite{harrison_brownian_1987} in some particular cases, then extended in \cite{dai_reflected_1992}), which characterizes the stationary distribution, see \cite{dai_characterization_1994,dai_characterization_2010,dai_nonnegativity_2011}. (It is the continuous analogue of the equation $\pi Q=0$, with $\pi$ the stationary distribution of a recurrent continuous-time Markov chain having infinitesimal generator matrix $Q$.) This basic adjoint relationship connects the stationary distribution $\pi$ and the corresponding boundary measures $\nu_1$ and $\nu_2$. 
We refer to \cite{foddy_1984,dai_reflected_1992,dai_reflecting_2011} for the details.

As stated in \cite[Open problem 1]{dai_nonnegativity_2011}, it is an open problem to prove that there is no finite \textit{signed} measure which satisfies the basic adjoint relationship (by signed we mean a measure taking positive and negative values). Going back to our case, we shall see that the functional equation \eqref{eq:functional_equation} admits a unique solution (the stationary distribution); in particular there is no solution of the functional equation with sign changes.

\subsection{Kernel}
\label{subsec:kernel}
By definition, the kernel of Equation~\eqref{eq:functional_equation} is the polynomial
\begin{equation*}  
     \gamma(\theta_1,\theta_2)=\frac{1}{2}(\sigma_{11} \theta_1^2 +2\sigma_{12}\theta_1\theta_2+ \sigma_{22} \theta_2^2 )+\mu_1\theta_1+\mu_2\theta_2.
\end{equation*}  
It can be alternatively written as
\begin{equation*}  
     \gamma(\theta_1, \theta_2)= \widetilde{a}(\theta_2)\theta_1^2+\widetilde{b}(\theta_2)\theta_1+\widetilde{c}(\theta_2)=a(\theta_1)\theta_2^2+b(\theta_1)\theta_2+c(\theta_1),
\end{equation*}  
where
\begin{equation}
\label{eq:definition_a_b_c}
\left\{\begin{array}{lll}
\widetilde{a}(\theta_2)=\frac{1}{2}\sigma_{11},\quad & \quad\widetilde{b}(\theta_2)=\sigma_{12}\theta_2+\mu_1,\quad & \quad\widetilde{c}(\theta_2)=\frac{1}{2}\sigma_{22}\theta_2^2+\mu_2\theta_2,\smallskip\smallskip \\ 
a(\theta_1)=\frac{1}{2}\sigma_{22},\quad & \quad b(\theta_1)=\sigma_{12}\theta_1+\mu_2,\quad & \quad c(\theta_1)=\frac{1}{2}\sigma_{11}\theta_1^2+\mu_1\theta_1. \\ 
\end{array} \right.
\end{equation}
The equation $\gamma(\theta_1, \theta_2) = 0$ defines a two-valued algebraic function $\Theta_1(\theta_2)$  such that $\gamma(\Theta_1(\theta_2), \theta_2)= 0$, and similarly $\Theta_2(\theta_1)$ such that $\gamma(\theta_1, \Theta_2(\theta_1))= 0$. Their expressions are given by
\begin{equation*}
     \Theta_1^\pm(\theta_2)=\frac{-\widetilde{b}(\theta_2)\pm\sqrt{\widetilde{d}(\theta_2)}}{2\widetilde{a}(\theta_2)},\qquad \Theta_2^\pm(\theta_1)=\frac{-b(\theta_1)\pm\sqrt{d(\theta_1)}}{2a(\theta_1)},
\end{equation*}
where $\widetilde d$ and $d$ are the discriminants of the kernel:
\begin{equation*}
\left\{\begin{array}{l}
\widetilde{d}(\theta_2)=\theta_2^2(\sigma_{12}^2-\sigma_{11}\sigma_{22})+2\theta_2(\mu_1\sigma_{12}-\mu_2\sigma_{11})+\mu_1^2,\smallskip
 \\ d(\theta_1)=\theta_1^2(\sigma_{12}^2-\sigma_{11}\sigma_{22})+2\theta_1(\mu_2\sigma_{12}-\mu_1\sigma_{22})+\mu_2^2. 
 \end{array} \right.
\end{equation*}
The polynomials $\widetilde d$ and $d$ have two zeros, real and of opposite signs, they are denoted by $\theta_2^{\pm}$ and $\theta_1^\pm$, respectively: $\theta_2^\pm$ is introduced in~\eqref{eq:definition_theta_2_pm} and
\begin{equation*}
     \theta_1^\pm= \frac{(\mu_2\sigma_{12}-\mu_1\sigma_{22}) \pm \sqrt{(\mu_2\sigma_{12}-\mu_1\sigma_{22})^2 +\mu_2^2 \det{\Sigma}}}{\det{\Sigma}}.
\end{equation*}
Equivalently, $\theta_1^\pm$ and $\theta_2^\pm$ are the branch points of the algebraic functions $\Theta_2$ and $\Theta_1$.

Finally, notice that $d$ is positive on $(\theta_1^-, \theta_1^+)$ and negative on ${\mathbb R} \setminus [\theta_1^-, \theta_1^+]$. Accordingly, the branches $\Theta_2^\pm$ take real and complex conjugate values on these sets, respectively. A similar statement holds for $\Theta_1^\pm$.


\subsection{Continuation of the Laplace transforms}
\label{subsec:continuation}
In Section~\ref{sec:BVP} we shall state a boundary condition for the functions $\phi_1$ and $\phi_2$, on curves which lie outside their natural domains of definition (the half-plane with negative real-part). We therefore need to continue these functions, which is done in the result hereafter.

\begin{lem}
\label{lem:prolongement}
We can continue meromorphically $\phi_1(\theta_2)$ to the open and simply connected set
\begin{equation}
\label{eq:domain_continuation}
     \{\theta_2\in\mathbb{C} : \Re\,\theta_2\leqslant 0 \text{ or } \Re\, \Theta_1^-(\theta_2) <0\},
\end{equation}
by setting
\begin{equation*}
     \phi_1(\theta_2)=\frac{\gamma_2}{\gamma_1}(\Theta_1^-(\theta_2),\theta_2)\phi_2(\Theta_1^-(\theta_2)).
\end{equation*}
\end{lem}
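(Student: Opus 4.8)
The backbone is to specialise the kernel equation~\eqref{eq:functional_equation} to the zero set of $\gamma$. For fixed $\theta_2$, the substitution $\theta_1=\Theta_1^-(\theta_2)$ makes $\gamma(\theta_1,\theta_2)=0$, so~\eqref{eq:functional_equation} collapses to
\[
 0=\gamma_1(\Theta_1^-(\theta_2),\theta_2)\,\varphi_1(\theta_2)+\gamma_2(\Theta_1^-(\theta_2),\theta_2)\,\varphi_2(\Theta_1^-(\theta_2)),
\]
which, divided by $\gamma_1(\Theta_1^-(\theta_2),\theta_2)$, is the formula of the statement. This manipulation is licit as soon as $\Re\,\theta_2\le0$ (so that~\eqref{eq:functional_equation} holds and $\varphi_1(\theta_2)$ is defined) and $\Re\,\Theta_1^-(\theta_2)\le0$ (so that $\varphi_2(\Theta_1^-(\theta_2))$ is defined); hence the identity already holds on the set $\Delta:=\{\Re\,\theta_2<0\}\cap\{\Re\,\Theta_1^-(\theta_2)<0\}$. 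The first thing I would verify is that $\Delta\neq\emptyset$: on the imaginary axis $\widetilde d(it)$ has positive real part $t^2\det\Sigma+\mu_1^2$, hence $\Re\sqrt{\widetilde d(it)}\ge|\mu_1|=-\mu_1$ (recall $\mu_1<0$ by~\eqref{eq:CNS_ergodic}), and so $\Re\,\Theta_1^-(it)=\bigl(-\mu_1-\Re\sqrt{\widetilde d(it)}\bigr)/\sigma_{11}\le0$, strictly for $t\neq0$; by continuity $\Re\,\Theta_1^-<0$ on a neighbourhood of $i\mathbb R\setminus\{0\}$, which meets $\{\Re\,\theta_2<0\}$.

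The second step is the extension. I would show that the right-hand side $\widetilde\varphi_1(\theta_2):=-\tfrac{\gamma_2}{\gamma_1}(\Theta_1^-(\theta_2),\theta_2)\,\varphi_2(\Theta_1^-(\theta_2))$ is meromorphic on $\G:=\{\theta_2\in\mathbb C:\Re\,\Theta_1^-(\theta_2)<0\}$: there $\varphi_2$, holomorphic on $\{\Re\,\theta_1<0\}$, is composed with $\Theta_1^-$ which has negative real part; $\gamma_1,\gamma_2$ are polynomials and $\gamma_1(\Theta_1^-(\theta_2),\theta_2)\not\equiv0$ (which is not the case, since $\Theta_1^-$ has the branch points $\theta_2^\pm$); and --- the crucial point --- the branch $\Theta_1^-$ of the two-valued algebraic function $\Theta_1$ is single-valued and holomorphic on $\G$. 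For this one chooses the branch cut of $\Theta_1$ (a curve joining $\theta_2^-$ and $\theta_2^+$) so that it avoids $\G$; this is possible because the study of the kernel places $\theta_2^\pm$ in $\{\Re\,\Theta_1^-\ge0\}$ --- there $\Theta_1^-(\theta_2^\pm)=-\widetilde b(\theta_2^\pm)/\sigma_{11}$ is real, with sign governed by~\eqref{eq:CNS_ergodic} --- and, more precisely, from the description of the boundary curve $\R=\{\theta_2\in\mathbb C:\Re\,\Theta_1^-(\theta_2)=0\}$ of $\G$. Since the original $\varphi_1$ and $\widetilde\varphi_1$ coincide on $\{\Re\,\theta_2<0\}\cap\G=\Delta$ by the first step, they glue into a single meromorphic function on $\{\Re\,\theta_2\le0\}\cup\G$, which is precisely the set~\eqref{eq:domain_continuation}, and this is the asserted meromorphic continuation.

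Finally I would settle the topological claims. Openness reduces to showing that the imaginary axis consists of interior points: for $t\neq0$ this follows from the strict inequality $\Re\,\Theta_1^-(it)<0$ found above, while $\theta_2=0$ is interior because $(\Theta_1^-)'(0)=-\mu_2/\mu_1<0$ (again~\eqref{eq:CNS_ergodic}), so that $\Re\,\Theta_1^-<0$ immediately to the right of $0$ and the closed left half-plane covers the rest of a neighbourhood of $0$. Simple connectedness I would deduce from the structure of $\R$: parametrising the kernel curve by $\theta_1=i\tau$, $\tau\in\mathbb R$, and solving $\gamma(i\tau,\theta_2)=0$ exhibits $\R$ as a simple curve through $0$, tangent there to $i\mathbb R$, which splits $\mathbb C$ into two simply connected regions, $\G$ being one of them, whence $\{\Re\,\theta_2\le0\}\cup\G$ is simply connected too. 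I expect the genuine difficulty to lie exactly in this geometric package --- the simplicity of $\R$, its position relative to the imaginary axis and to the branch points $\theta_2^\pm$, and the consequent single-valuedness of $\Theta_1^-$ on $\G$ --- the remainder being the substitution trick plus routine bookkeeping; this is no doubt why the study of the kernel precedes this lemma.
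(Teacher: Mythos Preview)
The paper does not actually give a proof: after stating the lemma it simply says ``Lemma~\ref{lem:prolongement} is immediate (one can find a refined version of it in \cite{franceschi_asymptotic_2016}).''  Your proposal is therefore far more detailed than anything the paper supplies, and your method --- evaluate the functional equation~\eqref{eq:functional_equation} on the zero set of $\gamma$ via $\theta_1=\Theta_1^-(\theta_2)$, obtain an identity valid on the overlap, and glue --- is precisely the standard argument that the word ``immediate'' is pointing at.

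A few remarks on the execution. You correctly obtain $\phi_1=-\tfrac{\gamma_2}{\gamma_1}\,\phi_2\circ\Theta_1^-$; the minus sign is indeed forced by~\eqref{eq:functional_equation} and is missing from the displayed formula in the statement. Your local symbols $\G=\{\Re\,\Theta_1^-<0\}$ and $\R=\{\Re\,\Theta_1^-=0\}$ clash with the paper's later use of the same letters for a different domain and hyperbola (Section~\ref{sec:BVP}); this is purely notational but worth renaming. Your nonemptiness and openness checks, and the computation $(\Theta_1^-)'(0)=-\mu_2/\mu_1<0$, use $\mu_1<0$ and $\mu_2<0$, which follow from~\eqref{eq:CNS_ergodic} in the orthogonal case $R=I$ treated in the paper (they need not hold for general $R$). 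Finally, the branch-cut placement and simple-connectedness discussion is the genuinely delicate part, as you say; your sketch is in the right direction but stops short of a full argument, and a clean treatment uses exactly the uniformisation of $\s$ developed in Section~\ref{sec:Riemann_surface} --- which is presumably why the authors defer the details to \cite{franceschi_asymptotic_2016}.
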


Lemma~\ref{lem:prolongement} is immediate (one can find a refined version of it in \cite{franceschi_asymptotic_2016}). A similar statement holds for the Laplace transform $\phi_1$. Anticipating slightly, we notice that the domain in~\eqref{eq:domain_continuation} contains the domain $\G$ in Figure~\ref{BVPtheta}.

\section{Statement of the boundary value problem}  
\label{sec:BVP}

\subsection{An important hyperbola}

For further use, we need to introduce the curve
\begin{equation}
\label{eq:curve_definition}
     \R =\{\theta_2\in\mathbb C: \gamma(\theta_1,\theta_2)=0 \text{ and } \theta_1\in(-\infty,\theta_1^-)\}=\Theta_2^\pm ((-\infty,\theta_1^-)).
\end{equation}
By the results of Section~\ref{subsec:kernel}, $d$ is negative on the interval $(-\infty,\theta_1^-)$, and thus the curve $\R$ is symmetrical w.r.t.\ the real axis, see Figure~\ref{BVPtheta}. Furthermore, it has a simple structure, as shown by the following elementary result, taken from \cite[Lemma 9]{Baccelli_Fayolle_1987}:
\begin{lem}
The curve $\R$ in~\eqref{eq:curve_definition} is a {\rm(}branch of a{\rm)} hyperbola, given by the equation
\begin{equation}
\label{eq:hyperbole}
     \sigma_{22}(\sigma_{12}^2-\sigma_{11}\sigma_{22})x^2+\sigma_{12}^2\sigma_{22}y^2-2\sigma_{22}(\sigma_{11}\mu_2-\sigma_{12}\mu_1)x=\mu_2(\sigma_{11}\mu_2-2\sigma_{12}\mu_1).
\end{equation}
\end{lem}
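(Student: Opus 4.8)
The plan is to eliminate the real variable $\theta_1$ from the relation $\gamma(\theta_1,\theta_2)=0$ along the parametrization $\theta_2=\Theta_2^\pm(\theta_1)$, $\theta_1\in(-\infty,\theta_1^-)$, and to identify the resulting relation between $x=\Re\,\theta_2$ and $y=\Im\,\theta_2$ as that of a conic. Fix $\theta_1<\theta_1^-$: by the last remark of Section~\ref{subsec:kernel} the discriminant $d(\theta_1)$ is negative, hence $\Theta_2^-(\theta_1)=\overline{\Theta_2^+(\theta_1)}$ is genuinely non-real, and every $\theta_2=x+iy\in\R$ with $y\neq0$ satisfies $\gamma(\theta_1,x+iy)=0$ for the corresponding real $\theta_1$. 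Writing $\gamma(\theta_1,\theta_2)=\frac12\sigma_{11}\theta_1^2+(\sigma_{12}\theta_2+\mu_1)\theta_1+\frac12\sigma_{22}\theta_2^2+\mu_2\theta_2$ and separating real and imaginary parts of $\gamma(\theta_1,x+iy)=0$ yields
\begin{equation*}
     \tfrac12\sigma_{11}\theta_1^2+(\sigma_{12}x+\mu_1)\theta_1+\tfrac12\sigma_{22}(x^2-y^2)+\mu_2 x=0
     \quad\text{and}\quad
     y\,(\sigma_{12}\theta_1+\sigma_{22}x+\mu_2)=0.
\end{equation*}

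Since $y\neq0$, the second relation gives $\theta_1=-(\sigma_{22}x+\mu_2)/\sigma_{12}$ as soon as $\sigma_{12}\neq0$ (the degenerate case $\sigma_{12}=0$, that is $\beta=\frac\pi2$, forces $x=-\mu_2/\sigma_{22}$, so that $\R$ is a vertical line, in agreement with~\eqref{eq:hyperbole}, which then reduces to $(\sigma_{22}x+\mu_2)^2=0$). Substituting this value of $\theta_1$ into the first relation, multiplying through by $2\sigma_{12}^2$, expanding, and collecting the coefficients of $x^2$, $y^2$, $x$ and the constant term produces, after multiplication by $-1$, exactly equation~\eqref{eq:hyperbole}.

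It remains to recognize~\eqref{eq:hyperbole} as (a branch of) a hyperbola. Since $\Sigma$ is positive definite, $\det\Sigma=\sigma_{11}\sigma_{22}-\sigma_{12}^2>0$, so the coefficients $\sigma_{22}(\sigma_{12}^2-\sigma_{11}\sigma_{22})$ of $x^2$ and $\sigma_{12}^2\sigma_{22}$ of $y^2$ in~\eqref{eq:hyperbole} have opposite signs; hence~\eqref{eq:hyperbole} defines a hyperbola. The constraint $\theta_1\in(-\infty,\theta_1^-)$, together with the continuity of $\Theta_2^\pm$ on that interval and the equality $\Theta_2^+(\theta_1^-)=\Theta_2^-(\theta_1^-)\in\mathbb R$, shows that $\R$ is the single connected branch through the vertex $\Theta_2(\theta_1^-)$; that vertex (where $y=0$) lies on~\eqref{eq:hyperbole} by letting $\theta_1\to\theta_1^-$ in the parametrization. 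The computation itself is elementary bookkeeping; the only mildly delicate points are the case distinction $\sigma_{12}=0$ and the behaviour at the vertex $y=0$, where the relation $\theta_1=-(\sigma_{22}x+\mu_2)/\sigma_{12}$ is established only for $y\neq0$ and must be propagated by continuity, together with keeping track of the overall sign so as to land on~\eqref{eq:hyperbole} rather than its negative.
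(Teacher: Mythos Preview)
Your proof is correct and follows essentially the same route as the paper: the paper uses Vieta's relations $\Theta_2^++\Theta_2^-=2x$ and $\Theta_2^+\Theta_2^-=x^2+y^2$ to eliminate $\theta_1$, which is exactly what your real/imaginary-part decomposition produces (the imaginary part is the sum relation, and the real part combined with it recovers the product relation). You are simply more explicit about the elimination, the degenerate case $\sigma_{12}=0$, and the identification of the conic as a hyperbola.
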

\begin{proof}
We saw in Section~\ref{subsec:kernel} that whenever $\theta_1\in(-\infty,\theta_1^-)$, the branches $\Theta_2^+(\theta_1)$ and $\Theta_2^-(\theta_1)$ are complex conjugate. Define $x$ and $y$ by $\Theta_2^+(\theta_1)=x+iy$. Then from the identities
\begin{equation*}
     \Theta_2^+ +\Theta_2^- = -\frac{b(\theta_1)}{a(\theta_1)}
=-2\frac{\sigma_{12}\theta_1+\mu_2}{\sigma_{22}}
=2x
\quad \text{and} \quad
\Theta_2^+ \cdot\Theta_2^- =\frac{c(\theta_1)}{a(\theta_1)}
=\frac{\sigma_{11}\theta_1^2+2\mu_1 \theta_1}{\sigma_{22}}
=x^2+y^2,
\end{equation*}
we obtain that $x$ and $y$ are real solutions of  \eqref{eq:hyperbole}.
\end{proof}

We shall denote by $\G$ the open domain of $\mathbb{C}$ bounded by $\R$ and containing $0$, see Figure~\ref{fig:correspondance_plane_sphere}. Obviously $\overline{\G}$, the closure of $\G$, is equal to $\G\cup\R$.

\begin{figure}[hbtp]
\centering
\includegraphics[scale=1]{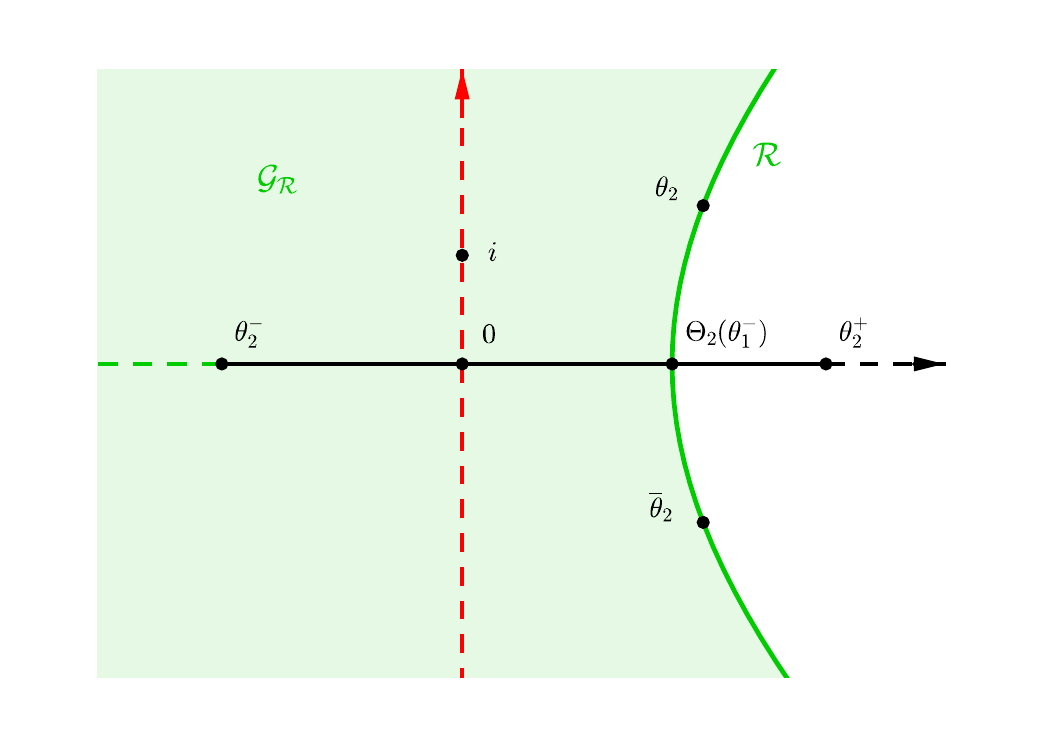}
\caption{The curve $\R$ in~\eqref{eq:curve_definition} is symmetric w.r.t.\ the horizontal axis, and $\G$ is the domain in green}
\label{BVPtheta}
\end{figure}

\subsection{BVP for orthogonal reflections}
\label{sec:boundary_cond}

In the case of orthogonal reflections, $R$ is the identity matrix in~\eqref{eq:RBMQP}, and we have $\gamma_1(\theta_1,\theta_2)=\theta_1$ and $\gamma_2 (\theta_1,\theta_2)=\theta_2$. We set
\begin{equation}
\label{eq:definition_psi}
     \psi_1 (\theta_2)=\frac{1}{\theta_2}\varphi_1 (\theta_2),\qquad \psi_2 (\theta_1)=\frac{1}{\theta_1}\varphi_2 (\theta_1).
\end{equation}

\begin{prop}
\label{prop:BVP}
The function $\psi_1$ satisfies the following BVP:
\begin{enumerate}[label={\rm (\roman{*})},ref={\rm (\roman{*})}]
     \item\label{item:1_BVP} $\psi_1$ is meromorphic on $\G$ with a single pole at $0$, of order $1$ and residue $\varphi_1(0)$, and vanishes at infinity,
     \item\label{item:2_BVP} $\psi_1$ is continuous on $\overline{\G}\setminus \{0\}$ and
\begin{equation}
\label{eq:boundary_condition}
     \psi_1(\overline{\theta_2})=\psi_1({\theta_2}), \qquad \forall \theta_2\in \R.
\end{equation}
\end{enumerate}
\end{prop}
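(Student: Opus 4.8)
The plan is to read off the two items from the functional equation~\eqref{eq:functional_equation}. In the orthogonal case $\gamma_1(\theta)=\theta_1$ and $\gamma_2(\theta)=\theta_2$, so on the kernel curve $\gamma(\theta_1,\theta_2)=0$ the equation collapses to $\theta_1\varphi_1(\theta_2)+\theta_2\varphi_2(\theta_1)=0$, that is, with the notation~\eqref{eq:definition_psi},
\[
     \psi_1(\theta_2)=-\psi_2(\theta_1)\qquad\text{whenever }\gamma(\theta_1,\theta_2)=0 .
\]
Combining this with Lemma~\ref{lem:prolongement} (where $\gamma_2/\gamma_1$ equals $\theta_2/\theta_1$ in the present case), the continued $\psi_1$ is given on the domain~\eqref{eq:domain_continuation} — which contains $\G$ — by the closed expression
\[
     \psi_1(\theta_2)=\pm\,\psi_2\bigl(\Theta_1^-(\theta_2)\bigr),
\]
the sign being that of Lemma~\ref{lem:prolongement} and irrelevant in what follows. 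I would use the first identity for the boundary condition~\eqref{eq:boundary_condition} and the closed expression for the analytic properties on $\G$.

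For item~\ref{item:1_BVP}: by Lemma~\ref{lem:prolongement} the continued $\varphi_1$ is meromorphic on the set~\eqref{eq:domain_continuation}, and I would first argue that it is in fact \emph{analytic} there. The only candidate poles of $\pm\frac{\theta_2}{\Theta_1^-(\theta_2)}\varphi_2(\Theta_1^-(\theta_2))$ lie at the zeros of $\Theta_1^-$, which solve $\gamma(0,\theta_2)=0$, i.e.\ at $\theta_2\in\{0,-2\mu_2/\sigma_{22}\}$. At $\theta_2=0$ the zero of $\Theta_1^-$ is simple — since $(\Theta_1^-)'(0)=-\mu_2/\mu_1\ne0$ — and is cancelled by the factor $\theta_2$, leaving $\varphi_1$ analytic at $0$ with $\varphi_1(0)$ equal to the total mass of $\nu_1$, which is positive. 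The point $-2\mu_2/\sigma_{22}>0$ causes no pole inside $\G$: any $\theta_2\in\G$ with $\Re\theta_2>0$ belongs to~\eqref{eq:domain_continuation} only through the second condition there, hence $\Re\Theta_1^-(\theta_2)<0$, in particular $\Theta_1^-(\theta_2)\ne0$. Hence $\psi_1=\varphi_1/\theta_2$ is meromorphic on $\G$ with a single pole, at $0$, of order $1$ and residue $\varphi_1(0)$. Continuity of $\psi_1$ on $\overline{\G}\setminus\{0\}$ follows from the continuation (in a suitably sharpened form, it extends $\varphi_1$ across $\R$). For the vanishing at infinity, on $\{\Re\theta_2\leqslant0\}$ one has $|\psi_1(\theta_2)|\leqslant\varphi_1(0)/|\theta_2|$ because $|\varphi_1(\theta_2)|\leqslant\varphi_1(\Re\theta_2)\leqslant\varphi_1(0)$, and on $\G\cap\{\Re\theta_2>0\}$ one has $\Re\Theta_1^-(\theta_2)<0$, whence $|\psi_1(\theta_2)|=|\varphi_2(\Theta_1^-(\theta_2))|/|\Theta_1^-(\theta_2)|\leqslant\varphi_2(0)/|\Theta_1^-(\theta_2)|$, and $|\Theta_1^-(\theta_2)|\to\infty$ as $|\theta_2|\to\infty$ since $\Theta_1^\pm(\theta_2)\sim\frac{-\sigma_{12}\pm i\sqrt{\det\Sigma}}{\sigma_{11}}\,\theta_2$ with nonzero coefficient ($\det\Sigma>0$). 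This gives~\ref{item:1_BVP}.

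For item~\ref{item:2_BVP}: continuity on $\overline{\G}\setminus\{0\}$ has just been obtained, so it remains to prove~\eqref{eq:boundary_condition}. Fix $\theta_2\in\R$. By~\eqref{eq:curve_definition} there is $\theta_1\in(-\infty,\theta_1^-)$ with $\gamma(\theta_1,\theta_2)=0$; since $d<0$ on $(-\infty,\theta_1^-)$, the two roots $\Theta_2^\pm(\theta_1)$ of $\gamma(\theta_1,\cdot)$ are complex conjugate, so $\overline{\theta_2}$ is the other one and $\gamma(\theta_1,\overline{\theta_2})=0$ with the \emph{same} real $\theta_1$. Evaluating the (continued) identity $\psi_1(\cdot)=-\psi_2(\theta_1)$ at $\theta_2$ and then at $\overline{\theta_2}$ yields $\psi_1(\theta_2)=-\psi_2(\theta_1)=\psi_1(\overline{\theta_2})$, the middle term being one and the same real number, since $\nu_2\geqslant0$ and $\theta_1\in\mathbb{R}$. (Equivalently: $\psi_1$ is real-valued on $\R$, and since $\varphi_1$ is real on $(-\infty,0]$, the Schwarz reflection principle gives $\psi_1(\overline{\theta_2})=\overline{\psi_1(\theta_2)}=\psi_1(\theta_2)$.) Together with the continuity already shown, this is~\ref{item:2_BVP}.

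The step I expect to be the real obstacle is the bookkeeping behind the first two paragraphs: identifying precisely which of the branches $\Theta_1^\pm$, $\Theta_2^\pm$ enters over $\R$ and over $\G$, and placing $\G$ and $\R$ relative to the branch points $\theta_1^\pm$, $\theta_2^\pm$, so as to guarantee that the continuation carries $\psi_1$ to a neighbourhood of $\overline{\G}\setminus\{0\}$ with no singularity other than the pole at $0$ — in particular that $\Theta_1^-(\overline{\G})$ remains in the domain of analyticity of $\varphi_2$, and that the functional equation genuinely extends along the kernel to all of $\R$. The rest (the kernel analysis of Section~\ref{subsec:kernel}, Lemma~\ref{lem:prolongement}, and elementary bounds for Laplace transforms of finite positive measures) is routine.
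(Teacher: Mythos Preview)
Your argument is correct and follows essentially the same route as the paper: use Lemma~\ref{lem:prolongement} to continue $\varphi_1$ (hence $\psi_1$) to $\G$, and derive~\eqref{eq:boundary_condition} by evaluating the collapsed functional equation $\psi_1(\Theta_2^\pm(\theta_1))=-\psi_2(\theta_1)$ at $\theta_1\in(-\infty,\theta_1^-)$. You supply more detail than the paper --- the explicit location and cancellation of the candidate poles of the continuation, and the quantitative decay estimates at infinity --- where the paper simply asserts that $\varphi_1$ is analytic and bounded on $\G$ and that ``the conclusion follows''; the branch-tracking you flag as the obstacle is exactly what the paper sweeps under that phrase.
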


\begin{proof}
Using the formula~\eqref{eq:definition_psi}, Point~\ref{item:1_BVP} is equivalent to proving that $\phi_1$ is analytic in $\G$ and is bounded at infinity. Both properties are obvious in the half-plane $\{\theta_2\in\mathbb C: \Re\,\theta_2<0\}$, thanks to the definition of $\phi_1$ as a Laplace transform. In the domain of $\G$ where $\Re\,\theta_2\geq0$, one can use the continuation formula given in Lemma~\ref{lem:prolongement}, and the conclusion follows.

Let us now prove~\ref{item:2_BVP}. Evaluating the (continued) functional equation ~\eqref{eq:functional_equation} at $(\theta_1,\Theta_2^\pm(\theta_1))$, we obtain
\begin{equation*}
     \psi_1(\Theta_2^\pm(\theta_1))+\psi_2(\theta_1)=0,
\end{equation*}
which immediately implies that
\begin{equation}
\label{eq:before_boundary_condition}
     \psi_1(\Theta_2^+(\theta_1))=\psi_1(\Theta_2^-(\theta_1)).
\end{equation}
Choosing $\theta_1\in (-\infty,\theta_1^-)$, the two quantities $\Theta_2^+(\theta_1)$ and $\Theta_2^-(\theta_1)$ are complex conjugate the one of the other, see Section~\ref{sec:analytic}. Equation~\eqref{eq:before_boundary_condition} can then be reformulated as~\eqref{eq:boundary_condition}, using the definition~\eqref{eq:curve_definition} of the curve $\R$.
\end{proof}

\subsection{Conformal gluing function and invariant theorem}
The BVP stated in Proposition~\ref{prop:BVP} is called a homogeneous BVP with shift (the shift stands here for the complex conjugation, but the theory applies to more general shifts, see \cite{litvinchuk_solvability_2000}). Due to its particularly simple form, we can solve it in an explicit way, using the two following steps: 
\begin{itemize}
     \item Using a certain conformal mapping (to be introduced below), we can construct a particular solution to the BVP of Proposition~\ref{prop:BVP}.
     \item The solution to the BVP of Proposition~\ref{prop:BVP} is unique, see Lemma~\ref{lemma:invariant} below (taken from Lemma 2 in \cite[Section 10.2]{litvinchuk_solvability_2000}). In other words, two different solutions must coincide, and the explicit solution constructed above must be the function $\psi_1$.
\end{itemize}
\begin{lem}[Invariant lemma]
\label{lemma:invariant}
The problem of finding functions $f$ such that
\begin{enumerate}[label={\rm (\roman{*})},ref={\rm (\roman{*})}]
     \item\label{item:1_BVP_bis} $f$ is analytic in $\G$ and continuous in $\overline{\G}$,
     \item\label{item:2_BVP_bis} $f$ satisfies the boundary condition \eqref{eq:boundary_condition},
\end{enumerate}     
does not have non-trivial solutions in the class of functions $f$ vanishing at infinity.
\end{lem}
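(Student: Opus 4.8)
The plan is to reduce the uniqueness statement to a classical argument: a function satisfying \ref{item:1_BVP_bis}, \ref{item:2_BVP_bis} and vanishing at infinity can be reflected across the boundary curve $\R$ to yield an entire function that vanishes at infinity, hence is identically zero. Concretely, suppose $f$ is analytic in $\G$, continuous on $\overline\G$, vanishes at infinity, and satisfies $f(\overline{\theta_2})=f(\theta_2)$ for all $\theta_2\in\R$. Since $\R$ is (a branch of) a hyperbola symmetric with respect to the real axis and $\G$ is the domain it bounds containing $0$, the reflection $\theta_2\mapsto\overline{\theta_2}$ maps the exterior domain $\mathbb C\setminus\overline\G$ onto a subset situated on the other side of $\R$; the key point is that $\G$ together with its mirror image and the curve $\R$ exhaust a neighbourhood of $\R$, so one can hope to glue.

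First I would define $g$ on the closure of the reflected-exterior region by $g(\theta_2)=f(\overline{\theta_2})$. This is continuous there and anti-analytic in $\overline{\theta_2}$, i.e.\ analytic in $\theta_2$, wherever $f$ is analytic — which requires knowing $f$ is defined and analytic on (a neighbourhood of) the exterior of $\overline\G$. But $f$ is only assumed defined on $\overline\G$, so instead I would argue directly: the boundary condition says $f$ and its ``conjugate-reflected'' copy agree on $\R$, and by a Morera/edge-of-the-wedge type argument (or simply by the Schwarz reflection principle adapted to the analytic arc $\R$) the function $f$ extends analytically across $\R$. Iterating or, more cleanly, using that $\R$ is a single analytic arc going to infinity in both directions and that $\mathbb C\setminus\R$ has exactly two components (namely $\G$ and its complement), the extension is to all of $\mathbb C$: we obtain an entire function $F$ with $F|_{\G}=f$, $F(\overline{\theta_2})=F(\theta_2)$, and $F$ bounded (it vanishes at infinity inside $\G$, and by the reflection symmetry it also vanishes at infinity in the other component). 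A bounded entire function vanishing at $\infty$ is $0$ by Liouville, hence $f\equiv 0$ on $\G$.

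I expect the main obstacle to be making the reflection/gluing rigorous at the level of \emph{continuity up to the boundary} rather than analyticity on an open neighbourhood: one must check that the two analytic pieces (the original $f$ on $\G$ and its reflected copy on the mirror domain), which a priori only match continuously along $\R$, actually match analytically — this is where one invokes the classical fact that two functions analytic on either side of a rectifiable (here real-analytic) arc and continuous up to it, with equal boundary values, glue to a single analytic function (a consequence of Morera's theorem). A secondary subtlety is the behaviour at infinity: $\R$ is unbounded, so one should either compactify (pass to the Riemann sphere, where $\R$ together with $\infty$ becomes a closed Jordan curve and $\G$ a Jordan domain) and check that $f$, vanishing at infinity, extends continuously to the point at infinity, or argue that the entire extension $F$ is bounded near $\infty$ by treating the two ends of the hyperbola separately. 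Since the statement is quoted from \cite[Section 10.2, Lemma 2]{litvinchuk_solvability_2000}, I would present the above as the conceptual reason and refer there for the technical details of the homogeneous BVP with the conjugation shift.
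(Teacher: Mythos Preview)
The paper does not give its own proof of this lemma; it simply cites \cite[Section 10.2, Lemma 2]{litvinchuk_solvability_2000}. So there is no paper-proof to compare against, and your final sentence---``present the above as the conceptual reason and refer there for the technical details''---is in fact exactly what the authors do.

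That said, your conceptual sketch contains a genuine geometric error that would prevent the argument from going through as written. The curve $\R$ is symmetric with respect to the real axis, and so is the domain $\G$ it bounds (since $0\in\mathbb R$ lies inside). Consequently the conjugation $\theta_2\mapsto\overline{\theta_2}$ maps $\G$ to $\G$ and $\mathbb C\setminus\overline{\G}$ to $\mathbb C\setminus\overline{\G}$; it does \emph{not} swap the two sides of $\R$. The boundary condition $f(\overline{\theta_2})=f(\theta_2)$ for $\theta_2\in\R$ therefore relates the values of $f$ at two different points \emph{of} $\R$ (mirror images across the real axis), not the boundary value of $f$ on $\R$ to a value on the other side of $\R$. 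So there is no Schwarz-reflection-across-$\R$ step available, and ``$f$ extends analytically across $\R$'' is not what the condition says.

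The correct mechanism---and this is precisely what the paper sets up in Lemma~\ref{lem:CGF}---is to use a conformal \emph{gluing} function $w:\G\to\mathbb C\setminus(-\infty,-1]$ that identifies $\theta_2$ with $\overline{\theta_2}$ on $\R$ (property~\ref{item:conformal_3}). Under $w$, the upper and lower halves of $\R$ are both sent to the slit $(-\infty,-1]$, and the boundary condition becomes: $f\circ w^{-1}$ has the same continuous limits from above and below the slit. \emph{Now} Morera applies across the slit, yielding an entire function which vanishes at infinity (since $w(\infty)=\infty$ and $f(\infty)=0$), hence is identically zero by Liouville. Your Morera-plus-Liouville endgame is right; what was missing is that the reflection/gluing step must go through a conformal map that folds $\R$ onto a slit, not through complex conjugation in the $\theta_2$-plane.
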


To construct a particular solution to the BVP of Proposition~\ref{prop:BVP}, we shall use the function
\begin{equation}
\label{eq:expression_w}
     {w} (\theta_2)=T_{\frac{\pi}{\beta}}\bigg(-\frac{2\theta_2-(\theta_2^++\theta_2^-)}{\theta_2^+-\theta_2^-}\bigg)
\end{equation}
introduced in Theorem~\ref{thm:main}. Let us first establish some of its properties.

\begin{lem}
\label{lem:CGF}
The function $w$ in \eqref{eq:expression_w} is such that:
\begin{enumerate}[label={\rm (\roman{*})},ref={\rm (\roman{*})}]
     \item\label{item:conformal_1} $w$ is analytic in $\mathcal{G}$, continuous in $\overline{\G}$ and unbounded at infinity,
     \item\label{item:conformal_2} $w$ is injective in $\mathcal{G}$ {\rm(}onto $\mathbb{C}\setminus(-\infty,-1]${\rm)},
     \item\label{item:conformal_3} $w(\theta_2)=w(\overline{\theta_2})$ for all $\theta_2\in\mathcal{R}$.
\end{enumerate}
\end{lem}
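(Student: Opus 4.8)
The plan is to verify the three properties of $w$ by reducing them to known facts about the generalized Chebyshev polynomial $T_{\pi/\beta}$ composed with the affine map
\[
     \ell(\theta_2)=-\frac{2\theta_2-(\theta_2^++\theta_2^-)}{\theta_2^+-\theta_2^-},
\]
which sends the interval $(\theta_2^-,\theta_2^+)$ onto $(-1,1)$ and, more importantly, sends $(\theta_2^+,\infty)$ onto $(-\infty,-1)$ (since the sign of $\theta_2^\pm$ is $\pm$, so $\theta_2^-<0<\theta_2^+$ and $\ell$ is orientation-reversing). Recall from the statement in Section~\ref{sec:introduction} that $T_a$ is analytic on $\mathbb C\setminus(-\infty,-1)$; hence $w=T_{\pi/\beta}\circ\ell$ is analytic on $\ell^{-1}(\mathbb C\setminus(-\infty,-1))=\mathbb C\setminus[\theta_2^+,\infty)$. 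The key geometric input, to be recorded first, is that the hyperbola branch $\R$ from \eqref{eq:hyperbole} together with the bounded domain $\G$ it encloses lies entirely in $\mathbb C\setminus[\theta_2^+,\infty)$; I would check this by noting that $\R$ crosses the real axis exactly at the point $\theta_2^-$ (the left branch point, which is $<0$), so $\overline\G\subset\{\Re z<\theta_2^+\}$ up to the single real point $\theta_2^-$, and in any case avoids the cut $[\theta_2^+,\infty)$. This gives item~\ref{item:conformal_1}: analyticity on $\G$ and continuity on $\overline\G$ are inherited from $T_{\pi/\beta}$, and unboundedness at infinity is clear since $T_a(x)\sim 2^{a-1}x^a\to\infty$ as $x\to\infty$ along $\ell(\R)$.

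For item~\ref{item:conformal_3}, the boundary relation $w(\theta_2)=w(\overline{\theta_2})$ for $\theta_2\in\R$: since $\ell$ has real coefficients, $\ell(\overline{\theta_2})=\overline{\ell(\theta_2)}$, and $T_{\pi/\beta}$ also has real coefficients on its domain (it is a real power series, being $\cos(a\arccos(\cdot))$ with the stated algebraic continuation), so $T_{\pi/\beta}(\overline z)=\overline{T_{\pi/\beta}(z)}$; thus $w(\overline{\theta_2})=\overline{w(\theta_2)}$ for all $\theta_2$. Then I must argue that $w$ is actually \emph{real} on $\R$, which forces $w(\theta_2)=\overline{w(\theta_2)}=w(\overline{\theta_2})$. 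This is where the geometry of $\R$ enters: $\ell$ maps $\R$ onto an arc, and the point is to show $\ell(\R)\subset(-\infty,-1]$, so that $T_{\pi/\beta}$ evaluated there is real-valued (by its defining formula $T_a(x)=\frac12\{(x+\sqrt{x^2-1})^a+(x-\sqrt{x^2-1})^a\}$, which is real for $x\le -1$ as a sum of a quantity and its reciprocal's conjugate — more cleanly, $T_a(-\cosh t)=\cos(a\pi)\cosh(at)$ type identities). To see $\ell(\R)\subset(-\infty,-1]$, I would plug the hyperbola parametrization into $\ell$: writing $\theta_2=x+iy\in\R$ with $x,y$ satisfying \eqref{eq:hyperbole}, a direct computation of $|\ell(\theta_2)|$ or of the product/sum $\Theta_2^+\Theta_2^-$, $\Theta_2^++\Theta_2^-$ from the proof of the hyperbola lemma should show that $\ell(\theta_2)$ lands on the negative real half-line to the left of $-1$; equivalently, one shows $\widetilde d(\theta_2)\le 0$ on $\R$, which places $\theta_2$ "between the branch points as seen through $\widetilde d$", i.e. on $[\theta_2^+,\infty)$ after applying $\ell$ — wait, more precisely $\ell$ conjugates the locus $\{\widetilde d\le 0\}$ to $(-\infty,-1]$, and I would check $\R\subset\{\widetilde d\le 0\}$ using that the two roots $\Theta_1^\pm(\theta_2)$ of $\gamma(\cdot,\theta_2)=0$ are the complex-conjugate pair producing $\R$.

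Item~\ref{item:conformal_2}, injectivity of $w$ on $\G$ with image $\mathbb C\setminus(-\infty,-1]$, is the one I expect to be the main obstacle. The building block is that $T_{\pi/\beta}$ restricted to a suitable region is conformal: concretely, $T_a=\cos\circ\,(a\,\cdot)\circ\arccos$, and $\arccos$ maps $\mathbb C\setminus(-\infty,-1]$ (minus possibly $[1,\infty)$, needs care) conformally onto a vertical strip, multiplication by $a=\pi/\beta$ rescales it to a strip of width $\pi^2/\beta\ge\pi$ — and here the condition $0<\beta\le\pi$ (equivalently $\pi/\beta\ge1$, coming from \eqref{eq:definition_beta} with correlation in $(-1,1]$) is essential so that this strip has width $\ge\pi$; then $\cos$ is injective on a strip of width exactly... hmm, $\cos$ is injective on strips of width $\pi$ but we may get width up to larger. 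The clean route is instead: show directly that $\ell$ maps $\G$ conformally onto the complement of a half-line or slit domain, and that $T_{\pi/\beta}$ restricted to that image is a conformal bijection onto $\mathbb C\setminus(-\infty,-1]$, by identifying the image of the boundary arc $\ell(\R)$ (already shown to be $(-\infty,-1]$, traversed once in each direction for the two complex-conjugate halves) and invoking the argument principle / boundary correspondence for conformal maps. I would structure this as: $w$ is analytic and non-constant on $\G$, extends continuously to $\overline\G\setminus\{\text{possibly one point}\}$, maps $\R$ two-to-one onto $(-\infty,-1]$ with the two sheets being complex conjugates, and sends $\infty$ to $\infty$; by the argument principle applied to $w-z_0$ for $z_0\notin(-\infty,-1]$, counting the winding of $w(\partial\G)$, each such $z_0$ has exactly one preimage, giving injectivity and surjectivity onto $\mathbb C\setminus(-\infty,-1]$ simultaneously. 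The delicate points are (a) justifying the behaviour at infinity and at the real point where $\R$ meets the axis, and (b) making the winding-number count rigorous despite $\partial\G$ being unbounded — this likely needs a compact-exhaustion argument or an explicit computation of $w$ on a large-radius arc, and is the step where I would expect to spend the most effort.
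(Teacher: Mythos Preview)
Your approach is genuinely different from the paper's: you work directly in the $\theta_2$-plane with the Chebyshev formula, whereas the paper lifts everything to the Riemann sphere $\s$ via the uniformisation \eqref{eq:formulas_uniformization}, defines $W(s)=w(\theta_2(s))=-\tfrac12\{(-s)^{\pi/\beta}+(-s)^{-\pi/\beta}\}$, and verifies the three properties for $W$ on the cone bounded by the rays $(0,-\infty)$ and $(0,-e^{i\beta}\infty)$. That change of variable is exactly what makes each step short: the domain $\G$ becomes a straight cone, $\R$ becomes the real half-line $(-\infty,0)$, the shift $\theta_2\mapsto\overline{\theta_2}$ becomes $s\mapsto 1/s$, and the identity $\theta_2(s)=\theta_2(e^{2i\beta}/s)$ is built in.

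Your argument for \ref{item:conformal_3} contains a genuine error. You reduce correctly to showing that $w$ is real-valued on $\R$ (using $w(\overline{\theta_2})=\overline{w(\theta_2)}$), but then try to obtain this from $\ell(\R)\subset(-\infty,-1]$. That inclusion is false: $\R$ consists of non-real points (complex-conjugate pairs $\Theta_2^\pm(\theta_1)$ for $\theta_1<\theta_1^-$), and $\ell$ is an affine map with real coefficients, so $\ell(\R)$ is again a non-real curve, symmetric about the real axis; its imaginary part is $-2y/(\theta_2^+-\theta_2^-)\neq 0$. The subsequent attempt via ``$\widetilde d(\theta_2)\le 0$ on $\R$'' is also off: for complex $\theta_2$ the discriminant $\widetilde d(\theta_2)$ is not real, and the complex-conjugate pair defining $\R$ is $\Theta_2^\pm(\theta_1)$, not $\Theta_1^\pm(\theta_2)$. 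What \emph{is} true is $w(\R)\subset(-\infty,-1]$, but that requires the full map $T_{\pi/\beta}\circ\ell$, not $\ell$ alone, and this is precisely the non-trivial gluing you are trying to prove. In the paper's variables it is immediate: for $s\in(-\infty,0)$ one has $-s>0$, so $W(s)$ is visibly real; and $W(s)=W(1/s)$ by inspection. Without the uniformisation you would need to show directly that the hyperbola $\R$ is mapped by $\ell$ onto an arc on which $\arccos$ takes values in $\pi+i\mathbb R$ (equivalently, that $\ell(\R)$ is one branch of the hyperbola with foci $\pm1$ passing through $\ell(\Theta_2(\theta_1^-))=-\cos\beta$), and then that multiplying by $\pi/\beta$ sends this to $\pi+i\mathbb R$; this is doable but is a real computation you have not indicated.

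Two smaller points. In \ref{item:conformal_1}, $\R$ meets the real axis at $\Theta_2(\theta_1^-)$, not at $\theta_2^-$ (cf.\ Figure~\ref{BVPtheta}); your conclusion $\overline{\G}\cap[\theta_2^+,\infty)=\varnothing$ is still correct, but the stated reason is not. In \ref{item:conformal_2}, the argument-principle route you sketch can be made to work, but note that the paper's way is considerably cleaner: $W(s)=W(t)$ forces $s=t^{\pm1}e^{2ik\beta}$, and inside the cone of opening $\beta$ only $s=t$ survives, giving injectivity at once.
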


The function $w$ is called a conformal gluing function. The conformal property comes from~\ref{item:conformal_1} and~\ref{item:conformal_2}, and the gluing from~\ref{item:conformal_3}: $w$ glues together the upper and lower parts of the hyperbola $\R$. There are at least two ways for proving Lemma~\ref{lem:CGF}. First, it turns out that in the literature there exist expressions for conformal gluing functions for relatively simple curves: circles, ellipses, and also for hyperbolas, see \cite[Equation (4.6)]{Baccelli_Fayolle_1987}. Following this way, we could obtain the expression \eqref{eq:expression_w} for $w$ given above and its different properties stated in Lemma~\ref{lem:CGF}. Instead, we would like to use the Riemann sphere given by 
\begin{equation*}
     \{(\theta_1,\theta_2)\in\mathbb C^2 :\gamma(\theta_1,\theta_2)=0\}. 
\end{equation*}
Indeed, as we shall see in Section~\ref{sec:Riemann_surface}, many (not to say all) technical aspects, in particular finding the conformal mapping, happen to be quite simpler on that surface. The proof of Lemma~\ref{lem:CGF} is thus postponed to Section~\ref{sec:Riemann_surface}.

Under the symmetry condition ($\mu_1=\mu_2$, $\sigma_{11}=\sigma_{22}$ and symmetric reflection vectors $R_1,R_2$ in~\eqref{eq:RBMQP}), Foschini also obtained an expression for that conformal mapping, see \cite[Figure 3]{Foschini}.

\subsection{Proof of Theorem~\ref{thm:main}} 

We are now ready to prove our main result. Let us introduce the function
\begin{equation*}
     f(\theta_2)=\psi_1(\theta_2) - \phi_1(0)\frac{w'(0)}{w(\theta_2)-w(0)}.
\end{equation*}
The key point is that $f$ satisfies the assumptions of Lemma~\ref{lemma:invariant}. 

The first item~\ref{item:1_BVP_bis} of Lemma~\ref{lemma:invariant} holds by construction: the only possible pole of $f$ is at $0$ ($\psi_1$ has a unique pole at $0$, see Proposition~\ref{prop:BVP}, and since $w$ is injective, see Lemma~\ref{lem:CGF}, the equation $w(\theta_2)-w(0)=0$ has only one solution, viz, $\theta_2=0$). However, a series expansion shows that the residue of $f$ at $0$ is $0$, in other words $0$ is a removable singularity. 

Point~\ref{item:2_BVP_bis} is also clear, since both $\psi_1$ and $w$ satisfy the boundary condition \eqref{eq:boundary_condition}. Furthermore, $f$ vanishes at infinity, since on the one hand $\psi_1$ does, and on the other hand $w$ goes to infinity at infinity. Using Lemma~\ref{lemma:invariant}, we conclude that $f=0$. It remains to show that $\phi_1(0)=-\mu_1$. For this it is enough to evaluate the functional equation~\eqref{eq:functional_equation}, first at $\theta_2=0$, and then at $\theta_1=0$.

\subsection{Diagonal covariance and dimension one}
\label{subsec:diagonal_covariance}
Assuming that the covariance matrix $\Sigma$ is diagonal, we have $\beta=\frac{\pi}{2}$ in \eqref{eq:definition_beta}, and $T_{\frac{\pi}{\beta}}=T_2$ is the second Chebyshev polynomial, given by $T_2(x)=2x^2-1$. The formula \eqref{eq:expression_w} for $w$ together with the expression \eqref{eq:definition_theta_2_pm} of $\theta_2^\pm$ yields
\begin{equation*}
     {w}(\theta_2)-w(0)=\frac{8}{\mu_1^2\sigma_{11}\sigma_{22}+\mu_2^2\sigma_{22}^2}\theta_2\left(\theta_2+\frac{2\mu_2}{\sigma_{22}}\right).
\end{equation*}
The formula of Theorem~\ref{thm:main} then gives 
\begin{equation*}
     \phi_1(\theta_2)=\frac{-\mu_1 w'(0)}{{w}(\theta_2)-{w}(0)} \theta_2=-\frac{2\mu_1\mu_2}{\sigma_{22}} \frac{1}{\theta_2+\frac{2\mu_2}{\sigma_{22}}},
\end{equation*}
and finally, after some elementary computations and the use of functional equation~\eqref{eq:functional_equation}, we obtain
\begin{equation}
\label{eq:expression_phi_diagonal_cov}
     \phi(\theta)=\frac{2\mu_1/\sigma_{11}}{(\theta_1+{2\mu_1}/{\sigma_{11}})}\frac{2\mu_2/\sigma_{22}}{(\theta_2+{2\mu_2}/{\sigma_{22}})}.
\end{equation}
By inversion of the Laplace transform, we reach the conclusion that
\begin{equation*}
     \nu_1 (x_2)=\frac{2\mu_1\mu_2}{\sigma_{22}} e^{\frac{2\mu_2}{\sigma_{22}} x_2}, \qquad 
     \nu_2 (x_1)=\frac{2\mu_1\mu_2}{\sigma_{11}} e^{\frac{2\mu_1}{\sigma_{11}} x_1},  \qquad
     \pi(x_1,x_2)=\frac{4\mu_1\mu_2}{\sigma_{11}\sigma_{22}}e^{\frac{2\mu_1}{\sigma_{11}} x_1+\frac{2\mu_2}{\sigma_{22}}x_2}.
\end{equation*}

Let us do two comments around \eqref{eq:expression_phi_diagonal_cov}. Firstly, the product-form expression \eqref{eq:expression_phi_diagonal_cov} is easily explained by the skew-symmetric condition (see \cite[Equation (10.2)]{MR628959})
\begin{equation*}
     2\Sigma=R\cdot\text{diag}(R)^{-1} \cdot \text{diag}(\Sigma)
+\text{diag}(\Sigma) \cdot \text{diag}(R)^{-1}\cdot R^{\top},
\end{equation*}
which is obviously satisfied if $R$ is the identity matrix and $\Sigma$ is diagonal (above, $\text{diag}(R)$ denotes the diagonal matrix with same diagonal entries as $R$ and $R^\top$ is the transpose matrix of $R$). 
Secondly, our method works also to study the one-dimensional case. Let indeed
\begin{equation*}
     X_t=X_0+W_t+\mu t + L_t^0
\end{equation*}
be a one-dimensional reflected Brownian motion with drift $\mu$, where $W$ is a Brownian motion of variance $\sigma>0$ and $L_t^0$ is the local time at $0$. Applying It\^o formula to $(e^{\theta X_t})$ and taking expected value over the invariant measure $\pi$, we obtain, with $\phi(\theta)=\mathbb{E}_\pi [e^{\theta X_1}]$,  
\begin{equation*}
     \left(\frac{\sigma}{2}\theta+\mu\right)\phi (\theta)=- \mathbb{E}_\pi[ L_1^0].
\end{equation*}
This is the functional equation in dimension one (compare with \eqref{eq:functional_equation}). Evaluating this identity at $0$ and remembering that $\phi(0)=1$ we find
$\mathbb{E}_\pi [L_1^0]=-\mu$, and then
\begin{equation*}
     \phi(\theta)=\frac{2\mu/\sigma}{\theta+2\mu/\sigma}.
\end{equation*}
This coincides with \eqref{eq:expression_phi_diagonal_cov}.

\subsection{Statement of the BVP in the general case}
\label{subsec:general_case}

We would like to close Section~\ref{sec:BVP} by stating the BVP in the case of arbitrary reflections (non necessarily orthogonal). Let us define for $\theta_2\in \R$
\begin{equation*}
     G(\theta_2)=\frac{\gamma_1}{\gamma_2}(\Theta_1^-(\theta_2),\theta_2)\frac{\gamma_2}{\gamma_1}(\Theta_1^-(\theta_2),\overline{\theta_2}).
\end{equation*}     
Using the same line of arguments as in the proof of Proposition~\ref{prop:BVP}, we obtain the following result:

\begin{prop}
\label{prop:BVP_general}
The function $\phi_1$ satisfies the following BVP:
\begin{enumerate}[label={\rm (\roman{*})},ref={\rm (\roman{*})}]
     \item\label{item:1_BVP_general} $\phi_1$ is meromorphic on $\G$ with at most one pole $p$ of order $1$, and is bounded at infinity, 
     \item\label{item:2_BVP_general} $\phi_1$ is continuous on $\overline{\G}\setminus \{p\}$ and
\begin{equation}
\label{eq:boundary_condition_general}
     \phi_1(\overline{\theta_2})=G(\theta_2)\phi_1({\theta_2}), \qquad \forall \theta_2\in \R.
\end{equation}
\end{enumerate}
\end{prop}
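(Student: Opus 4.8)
The plan is to mimic the proof of Proposition~\ref{prop:BVP}, but to keep track of the prefactors $\gamma_1$ and $\gamma_2$ which are no longer simply $\theta_1$ and $\theta_2$. First I would establish item~\ref{item:1_BVP_general}: the analyticity of $\phi_1$ in $\G$ is immediate on the part $\{\Re\,\theta_2<0\}$ from its definition as a Laplace transform, and on the remaining part of $\G$ (where $\Re\,\theta_2\geq 0$) one invokes the continuation formula of Lemma~\ref{lem:prolongement}, namely $\phi_1(\theta_2)=\frac{\gamma_2}{\gamma_1}(\Theta_1^-(\theta_2),\theta_2)\phi_2(\Theta_1^-(\theta_2))$. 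Since $\phi_2$ is analytic where $\Re\,\Theta_1^-(\theta_2)<0$ and the rational function $\frac{\gamma_2}{\gamma_1}$ can vanish only where its denominator $\gamma_1(\Theta_1^-(\theta_2),\theta_2)=0$, the only possible singularity in $\G$ is a simple pole $p$ coming from a zero of $\gamma_1$ along the continuation curve (plus possibly a pole of $\phi_2$, but that is ruled out just as in the orthogonal case — one has to check $\phi_2$ has no pole in the relevant region); boundedness at infinity follows from the fact that $\phi_2$ is bounded at infinity and $\frac{\gamma_2}{\gamma_1}$ stays bounded along $\R$ as $\theta_2\to\infty$.

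Next I would prove item~\ref{item:2_BVP_general}, the boundary condition. Evaluating the (continued) functional equation~\eqref{eq:functional_equation} at the point $(\Theta_1^-(\theta_2),\theta_2)$ kills the left-hand side (since $\gamma(\Theta_1^-(\theta_2),\theta_2)=0$) and yields
\begin{equation*}
     \gamma_1(\Theta_1^-(\theta_2),\theta_2)\,\phi_1(\theta_2)+\gamma_2(\Theta_1^-(\theta_2),\theta_2)\,\phi_2(\Theta_1^-(\theta_2))=0 ,
\end{equation*}
so that $\phi_2(\Theta_1^-(\theta_2))=-\frac{\gamma_1}{\gamma_2}(\Theta_1^-(\theta_2),\theta_2)\,\phi_1(\theta_2)$. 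Now take $\theta_2\in\R$. The crucial observation, from the definition~\eqref{eq:curve_definition} of $\R$, is that $\Theta_1^-(\theta_2)$ is real on $\R$: indeed $\R=\Theta_2^\pm((-\infty,\theta_1^-))$, so a point $\theta_2\in\R$ is of the form $\Theta_2^{\pm}(t)$ with $t\in(-\infty,\theta_1^-)$ real, and one recovers $t$ as one of the two values $\Theta_1^\pm(\theta_2)$; moreover $\overline{\theta_2}$ is also on $\R$ with the same associated real $t$, hence $\Theta_1^-(\theta_2)=\Theta_1^-(\overline{\theta_2})=t\in\mathbb R$ (this uses that $\theta_2$ and $\overline{\theta_2}$ are $\Theta_2^+(t)$ and $\Theta_2^-(t)$, which are complex conjugate). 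Writing the displayed relation both at $\theta_2$ and at $\overline{\theta_2}$ and dividing, the common quantity $\phi_2(\Theta_1^-(\theta_2))=\phi_2(t)$ cancels, leaving
\begin{equation*}
     \phi_1(\overline{\theta_2})=\frac{\gamma_1}{\gamma_2}(\Theta_1^-(\theta_2),\theta_2)\cdot\frac{\gamma_2}{\gamma_1}(\Theta_1^-(\theta_2),\overline{\theta_2})\cdot\phi_1(\theta_2)=G(\theta_2)\,\phi_1(\theta_2),
\end{equation*}
which is exactly~\eqref{eq:boundary_condition_general}. Continuity of $\phi_1$ on $\overline{\G}\setminus\{p\}$ follows since the continuation is built from continuous pieces away from the pole.

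The main obstacle, I expect, is not the algebra of the boundary relation — that is essentially bookkeeping — but the fine control of the zeros of $\gamma_1(\Theta_1^-(\theta_2),\theta_2)$ inside $\G$ and the exclusion of poles of $\phi_2$ there, i.e.\ the verification that there is \emph{at most one} simple pole $p$ and no worse singular behaviour. In the orthogonal case this was transparent because $\gamma_1=\theta_1$ and $\Theta_1^-(0)=0$; in the general case one must argue that $\gamma_1(\Theta_1^-(\cdot),\cdot)$, being (up to the radical in $\Theta_1^-$) a genuine function analytic on the simply connected continuation domain, has a single zero in $\G$, using the ergodicity inequalities~\eqref{eq:CNS_ergodic} to locate it; a clean way is to count zeros via the argument principle along $\R$, exactly as one does on the Riemann surface in Section~\ref{sec:Riemann_surface}. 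Apart from that, one should also double check that the point $p$ (if it exists) genuinely lies in $\G$ and is not on $\R$, so that the boundary condition~\eqref{eq:boundary_condition_general} makes sense everywhere on $\R$; this again reduces to the sign analysis of $d$ and of the branches $\Theta_1^\pm,\Theta_2^\pm$ already recorded in Section~\ref{subsec:kernel}.
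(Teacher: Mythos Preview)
Your proposal is correct and follows exactly the route the paper indicates: the paper does not give a separate proof for Proposition~\ref{prop:BVP_general} but simply states that it follows ``using the same line of arguments as in the proof of Proposition~\ref{prop:BVP}'', which is precisely what you carry out, keeping track of the factors $\gamma_1,\gamma_2$ and eliminating $\phi_2(\Theta_1^-(\theta_2))$ between the evaluations at $\theta_2$ and $\overline{\theta_2}$. Your discussion of the pole control (the zero of $\gamma_1(\Theta_1^-(\cdot),\cdot)$ in $\G$) is in fact more detailed than anything the paper provides here.
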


Due to the presence of the function $G\neq1$ in \eqref{eq:boundary_condition_general}, this BVP (still homogeneous with shift) is more complicated than the one encountered in Proposition~\ref{prop:BVP}, and cannot be solved thanks to an invariant lemma. Instead, the resolution is less combinatorial and far more technical, and the solution should be expressed in terms of Cauchy integrals and the conformal mapping $w$ of Lemma~\ref{lem:CGF}. This will be achieved in a future work. 

From the viewpoint of the general BVP of Proposition~\ref{prop:BVP_general}, we thus solved in this note a case where the variables $\theta_1$ and $\theta_2$ could be separated in the quantity (the right-hand side of the functional equation \eqref{eq:functional_equation})
\begin{equation*}
     \gamma_1 (\theta) \varphi_1 (\theta_2) + \gamma_2 (\theta) \varphi_2 (\theta_1).
\end{equation*}
Let us say a few words on another case (the only other case, in fact) where the variables can also be separated, namely, when $\gamma_1=\gamma_2$. In this case the reflections are parallel, and the function $\phi_1$ (instead of $\psi_1$) satisfies the BVP of Proposition~\ref{prop:BVP}, with no pole in $\G$. With Lemma~\ref{lemma:invariant}, it has to be a constant. In other words, there is no invariant measure, which is in accordance with the fact that with parallel reflections the condition of ergodicity \eqref{eq:CNS_ergodic} is obviously not satisfied.

\section{Singularity analysis}
\label{sec:singularity}

\subsection{Statement of the result}

In the literature, an important aspect of reflected Brownian motion in the quarter plane (and more generally in orthants) is the asymptotics of its stationary distribution, see indeed \cite{dai_reflecting_2011,franceschi_asymptotic_2016} for the asymptotics of the interior measure, and \cite{dai_stationary_2013} for the boundary measures. With our Theorem~\ref{thm:main} and a classical singularity analysis of Laplace transforms (our main reference for this is the book \cite{Doetsch}), we can easily obtain such asymptotic expansions.

Precisely, we identify three regimes, depending on the sign of $\Theta_1(\theta_2^+)$. The fact that the latter quantity determines the asymptotics can be easily explained: as we shall see in Section~\ref{sec:Riemann_surface}, the pole $\theta_2\in\mathbb{C} \setminus [\theta_2^+,\infty)$ of $\phi_1$ that is the closest to the origin is (provided it exists) a solution to $\Theta_1^+(\theta_2)=0$. The relative locations of $\Theta_1^+(\theta_2)$ and $0$ will therefore decide which of that pole or of the algebraic singularity $\theta_2^+$ will be the first singularity of $\phi_1$. 

Define the constants
\begin{equation}
\label{eq:expression_C_1_C_2}
     C_1= \frac{-\phi_1(\theta_2^+) 2\frac{\pi}{\beta}\sin(\frac{\pi}{\beta}\pi)}{(w(\theta_2^+)-w(0))\sqrt{\theta_2^+-\theta_2^-}},\qquad C_2=\frac{-\mu_1 w'(0)\theta_2^+ \sqrt{\theta_2^+-\theta_2^-}}{2\frac{\pi}{\beta}\sin(\frac{\pi}{\beta}\pi)}.
\end{equation}
\begin{prop}
\label{prop:asymptotics}
As $x_2\to\infty$, 
the asymptotics of $\nu_1 (x_2)$ is given by
\begin{equation*}
     \nu_1 (x_2)= \left\{\begin{array}{ll}
     \displaystyle {x_2}^{-\frac{3}{2}}e^{-\theta_2^+x_2} \left(\frac{-C_1}{2\sqrt{\pi}}+o(1)\right)& \text{if $\Theta_1(\theta_2^+)<0$},\smallskip
     \\
     \displaystyle {x_2}^{-\frac{1}{2}}e^{-\theta_2^+x_2} \left(\frac{C_2}{\sqrt{\pi}}+o(1)\right) & \text{if $\Theta_1(\theta_2^+)=0$},\smallskip
     \\
     \displaystyle e^{2\frac{\mu_2}{\sigma_{22}}x_2}\left(\frac{2\mu_1\mu_2}{\sigma_{22}}+o(1)\right)  & \text{if $\Theta_1(\theta_2^+)>0$},
     \end{array}\right.
\end{equation*}   
where the constants $C_1$ and $C_2$ are defined in \eqref{eq:expression_C_1_C_2}.
\end{prop}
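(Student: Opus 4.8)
The plan is to apply a standard singularity analysis of Laplace transforms (in the spirit of Doetsch) to the explicit formula
$$\nu_1(x_2)=\frac{1}{2\pi i}\int e^{-\theta_2 x_2}\phi_1(-\theta_2)\,\dd\theta_2,$$
so that the behaviour of $\nu_1(x_2)$ as $x_2\to\infty$ is governed by the singularity of $\phi_1$ closest to the origin. By Theorem~\ref{thm:main}, $\phi_1(\theta_2)=\frac{-\mu_1 w'(0)}{w(\theta_2)-w(0)}\theta_2$, so the candidate singularities are (a) the algebraic branch point $\theta_2^+$ coming from the $\sqrt{\cdot}$ inside $T_{\pi/\beta}$, and (b) a pole, namely a zero $\theta_2=p$ of $w(\theta_2)-w(0)$ other than $0$; by injectivity of $w$ on $\G$ such a $p$ must lie outside $\overline\G$, and as indicated in the excerpt it is a solution of $\Theta_1^+(\theta_2)=0$. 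The first step is therefore to locate these two competing singularities and decide which one dominates; this is exactly where the sign of $\Theta_1(\theta_2^+)$ enters, since $\Theta_1^+$ is monotone along the relevant real segment and $\Theta_1^+(\theta_2^+)<0$, $=0$, $>0$ corresponds to the pole $p$ being further from, equal to, or closer to the origin than $\theta_2^+$.

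Next I would treat the three regimes separately. When $\Theta_1(\theta_2^+)>0$: the dominant singularity is the pole $p$ (which one checks equals $-2\mu_2/\sigma_{22}$, consistently with the diagonal-covariance computation in Section~\ref{subsec:diagonal_covariance}); it is simple, so the Laplace inversion gives a pure exponential $e^{2\frac{\mu_2}{\sigma_{22}}x_2}$ times the residue, which is computed from $\phi_1$ to be $\frac{2\mu_1\mu_2}{\sigma_{22}}$. When $\Theta_1(\theta_2^+)<0$: the dominant singularity is $\theta_2^+$ and is purely algebraic. Here I expand $w(\theta_2)-w(0)$ near $\theta_2^+$: since $T_{\pi/\beta}(u)$ has a square-root-type expansion in $(u-(-1))$ near $u=-1$, and the argument of $T_{\pi/\beta}$ equals $-1$ precisely at $\theta_2=\theta_2^+$, one gets $w(\theta_2)=w(\theta_2^+)+A\sqrt{\theta_2^+-\theta_2}+o(\sqrt{\cdot})$ with $A$ expressible through $\frac{\pi}{\beta}\sin(\frac{\pi}{\beta}\pi)$ and $\sqrt{\theta_2^+-\theta_2^-}$; expanding $\phi_1$ accordingly produces a leading term of the form $\text{const}\cdot\sqrt{\theta_2^+-\theta_2}$, whose Laplace inversion (via the transfer theorem $\mathcal{L}[x^{-3/2}]\sim$ const$\cdot\sqrt{s}$, giving the $\Gamma(-\tfrac12)=-2\sqrt\pi$ factor) yields $x_2^{-3/2}e^{-\theta_2^+x_2}$ times $\frac{-C_1}{2\sqrt\pi}$. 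When $\Theta_1(\theta_2^+)=0$: the pole $p$ collides with the branch point $\theta_2^+$, so near $\theta_2^+$ the factor $w(\theta_2)-w(0)$ vanishes like $\sqrt{\theta_2^+-\theta_2}$ rather than being regular; hence $\phi_1$ blows up like $1/\sqrt{\theta_2^+-\theta_2}$, whose inversion gives $x_2^{-1/2}e^{-\theta_2^+x_2}$ times $\frac{C_2}{\sqrt\pi}$ (here the $\Gamma(\tfrac12)=\sqrt\pi$ appears), and a short computation identifies the constant with $C_2$ in~\eqref{eq:expression_C_1_C_2}.

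For each regime the routine but necessary work is: (i) justify that the Laplace inversion contour can be deformed to a Hankel-type contour around the dominant singularity with a negligible remaining integral (using that $\phi_1$ is meromorphic on $\mathbb{C}\setminus(\theta_2^+,\infty)$ by Theorem~\ref{thm:main} and decays at infinity by Proposition~\ref{prop:BVP}), and (ii) plug the local expansion of $\phi_1$ into the standard transfer lemma for Laplace transforms from~\cite{Doetsch}, matching Gamma-function constants. The analogue for $\nu_2$ follows by the symmetric version of Theorem~\ref{thm:main}.

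The main obstacle I anticipate is the borderline case $\Theta_1(\theta_2^+)=0$: one has to show carefully that the pole and the branch point genuinely merge into a single $(\theta_2^+-\theta_2)^{-1/2}$-type singularity (and not, say, a stronger pole or an extra logarithm), which requires a precise two-term expansion of $w$ near $\theta_2^+$ together with the fact that $w'$ is finite and nonzero there. A secondary technical point, present in all three regimes, is verifying that no other singularity of $\phi_1$ on the cut plane lies as close to $0$ as the claimed one; this uses the injectivity of $w$ on $\G$ (Lemma~\ref{lem:CGF}) to rule out spurious zeros of $w(\theta_2)-w(0)$ inside $\overline\G$, and the monotonicity of $\Theta_1^+$ to control the unique relevant zero outside.
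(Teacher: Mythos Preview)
Your approach is essentially the same as the paper's: the proof there also reduces to transfer theorems \`a la Doetsch applied to the explicit $\phi_1$ of Theorem~\ref{thm:main}, with the competition between the algebraic branch point $\theta_2^+$ (expanded via $T_a(x)=\cos(a\pi)+a\sqrt{2}\sin(a\pi)\sqrt{x+1}+O(x+1)$ near $x=-1$) and the zeros of $w(\theta_2)-w(0)$ decided by the sign of $\Theta_1(\theta_2^+)$. The only tactical difference is that for ruling out spurious poles and locating the closest one at $-2\mu_2/\sigma_{22}$, the paper does not use the ``monotonicity of $\Theta_1^+$'' you invoke (which is not so transparent), but instead lifts the equation $w(\theta_2)=w(0)$ to the Riemann sphere via the uniformisation~\eqref{eq:formulas_uniformization}, where it becomes $W(s)=W(s_0)$ and all solutions $s=s_0^{\pm1}e^{2ik\beta}$ are explicit and their distance to the branch point $e^{i\beta}$ is read off geometrically.
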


Note that we obtain (for the particular case of orthogonal reflections) the asymptotic result of Dai and Miyazawa in \cite[Theorem 6.1]{dai_reflecting_2011}, with the additional information of the explicit expression for the constants. 

Note that $\beta=\pi/k$ for $k\geq 2$ implies $\Theta_1(\theta_2^+)>0$ and that $k$ cannot be $1$ due to the condition on $\Sigma$.


\subsection{Proof of Proposition~\ref{prop:asymptotics}}

Proposition~\ref{prop:asymptotics} is an easy consequence of the singularity analysis of $\phi_1$ and of classical transfer theorems, as \cite[Theorem 37.1]{Doetsch}. Due to the expression of $\phi_1$ in Theorem~\ref{thm:main}, there are two sources of singularities: the singularities of $w$ and the points $\theta_2$ where the denominator $w(\theta_2)-w(0)$ of $\phi_1$ vanishes.

Let us first study the singularities of $w$. In fact, the function $w$ can not only be analytically continued on $\G$ as claimed in Lemma~\ref{lem:CGF}, but on the whole of the cut plane $\mathbb C\setminus [\theta_2^+,\infty)$. Further, except if $\pi/\beta\in\mathbb Z$, in which case $w$ is a polynomial, $w$ has an algebraic-type singularity at $\theta_2^+$, given by the following result:


\begin{lem}
\label{lem:singularities_1}
If $\pi/\beta\notin\mathbb Z$, $\phi_1$ has an algebraic-type singularity at $\theta_2^+$, in the neighborhood of which it admits the expansion: 
\begin{equation*}
     \phi_1(\theta_2)=
     \begin{cases}    
     \phi_1(\theta_2^+) +
   C_1 \sqrt{\theta_2^+-\theta_2}  +O({\theta_2-\theta_2^+}) & \text{ if } 
   w(\theta_2^+)-w(0)\neq 0,
   \\
   \frac{C_2}{\sqrt{\theta_2^+-\theta_2}} +O(1) & \text{ if }
   w(\theta_2^+)-w(0)= 0,
   \end{cases}
\end{equation*}
where $C_1$ and $C_2$ are defined in \eqref{eq:expression_C_1_C_2}.   
\end{lem}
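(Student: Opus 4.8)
\textbf{Proof plan for Lemma~\ref{lem:singularities_1}.}

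The plan is to start from the explicit formula $\phi_1(\theta_2) = \frac{-\mu_1 w'(0)}{w(\theta_2)-w(0)}\theta_2$ of Theorem~\ref{thm:main}, so that the local behaviour of $\phi_1$ near $\theta_2^+$ is entirely governed by the local behaviour of $w$ near $\theta_2^+$ (the factor $\theta_2$ being analytic and nonzero there, at least once we note $\theta_2^+>0$). First I would analyse $w$ itself. Recall $w(\theta_2) = T_{\pi/\beta}\bigl(-\frac{2\theta_2-(\theta_2^++\theta_2^-)}{\theta_2^+-\theta_2^-}\bigr)$; write the affine argument as $u(\theta_2) = -\frac{2\theta_2-(\theta_2^++\theta_2^-)}{\theta_2^+-\theta_2^-}$, which is a bijection sending $\theta_2^+\mapsto -1$ and $\theta_2^-\mapsto +1$, with $u'(\theta_2) = -\frac{2}{\theta_2^+-\theta_2^-}$. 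So I need the behaviour of $T_a(u)$ as $u\to -1$, for $a = \pi/\beta \notin\mathbb Z$. Using $T_a(u) = \cos(a\arccos u)$ and setting $u = -1+\varepsilon$, one has $\arccos(-1+\varepsilon) = \pi - \sqrt{2\varepsilon} + O(\varepsilon^{3/2})$, hence $T_a(-1+\varepsilon) = \cos\bigl(a\pi - a\sqrt{2\varepsilon}+O(\varepsilon^{3/2})\bigr) = \cos(a\pi) + a\sin(a\pi)\sqrt{2\varepsilon} + O(\varepsilon)$. Since $a\notin\mathbb Z$, the coefficient $a\sin(a\pi)\neq 0$, which produces the genuine square-root singularity; this is where the hypothesis $\pi/\beta\notin\mathbb Z$ is used (when $\pi/\beta\in\mathbb Z$, $T_a$ is a polynomial and there is no singularity, consistent with the statement). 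Translating back via $\varepsilon = u(\theta_2)-(-1) = \frac{2(\theta_2^+-\theta_2)}{\theta_2^+-\theta_2^-}$, I get
\begin{equation*}
     w(\theta_2) = w(\theta_2^+) + 2\frac{\pi}{\beta}\sin\!\Bigl(\frac{\pi}{\beta}\pi\Bigr)\sqrt{\frac{2\cdot 2}{\theta_2^+-\theta_2^-}}\,\sqrt{\theta_2^+-\theta_2}\,\bigl(1+o(1)\bigr),
\end{equation*}
i.e. $w(\theta_2) = w(\theta_2^+) + K\sqrt{\theta_2^+-\theta_2} + O(\theta_2^+-\theta_2)$ with $K = \frac{4\frac{\pi}{\beta}\sin(\frac{\pi}{\beta}\pi)}{\sqrt{\theta_2^+-\theta_2^-}}$ (up to a constant factor I would pin down carefully); note $w(\theta_2^+) = T_{\pi/\beta}(-1) = \cos(\frac{\pi}{\beta}\pi)$.

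Next I would substitute this expansion into $\phi_1$ and distinguish the two cases of the statement. If $w(\theta_2^+)-w(0)\neq 0$, then the denominator $w(\theta_2)-w(0) = \bigl(w(\theta_2^+)-w(0)\bigr)\bigl(1 + \frac{K}{w(\theta_2^+)-w(0)}\sqrt{\theta_2^+-\theta_2}+\cdots\bigr)$ is nonzero and analytic-plus-square-root near $\theta_2^+$; expanding the reciprocal geometrically and multiplying by the analytic factor $-\mu_1 w'(0)\theta_2$ yields
\begin{equation*}
     \phi_1(\theta_2) = \phi_1(\theta_2^+) + C_1\sqrt{\theta_2^+-\theta_2} + O(\theta_2-\theta_2^+),
\end{equation*}
and matching the coefficient of $\sqrt{\theta_2^+-\theta_2}$ gives $C_1 = -\mu_1 w'(0)\theta_2^+\cdot\frac{-K}{(w(\theta_2^+)-w(0))^2}$; using once more the formula $\phi_1(\theta_2^+) = \frac{-\mu_1 w'(0)\theta_2^+}{w(\theta_2^+)-w(0)}$ this rearranges to $C_1 = \frac{-\phi_1(\theta_2^+)\,K}{w(\theta_2^+)-w(0)}$, which is exactly the expression in \eqref{eq:expression_C_1_C_2} once $K$ is identified with $2\frac{\pi}{\beta}\sin(\frac{\pi}{\beta}\pi)\cdot\frac{2}{\sqrt{\theta_2^+-\theta_2^-}}$ — I will be careful with the factor of $2$ coming from $\varepsilon = \frac{2(\theta_2^+-\theta_2)}{\theta_2^+-\theta_2^-}$ and the $\sqrt2$ in the $\arccos$ expansion. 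If instead $w(\theta_2^+)-w(0) = 0$, then $w(\theta_2)-w(0) = K\sqrt{\theta_2^+-\theta_2}(1+o(1))$, so $\phi_1(\theta_2) = \frac{-\mu_1 w'(0)\theta_2}{K\sqrt{\theta_2^+-\theta_2}}(1+o(1)) = \frac{C_2}{\sqrt{\theta_2^+-\theta_2}}+O(1)$ with $C_2 = \frac{-\mu_1 w'(0)\theta_2^+}{K} = \frac{-\mu_1 w'(0)\theta_2^+\sqrt{\theta_2^+-\theta_2^-}}{2\frac{\pi}{\beta}\sin(\frac{\pi}{\beta}\pi)}$, matching \eqref{eq:expression_C_1_C_2}. (In this degenerate case $0$ is no longer a pole of $\phi_1$ but a point where denominator and numerator both vanish at the same spot $\theta_2^+$; one should check $\theta_2^+\neq 0$, which holds since $\theta_2^+>0$.)

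The main obstacle I anticipate is purely bookkeeping rather than conceptual: getting the constants $C_1$ and $C_2$ to come out \emph{exactly} as written in \eqref{eq:expression_C_1_C_2}, which requires tracking precisely the chain of constants through (i) the $\arccos(-1+\varepsilon)=\pi-\sqrt{2\varepsilon}+\cdots$ expansion, (ii) the affine change of variable $\varepsilon = \frac{2(\theta_2^+-\theta_2)}{\theta_2^+-\theta_2^-}$, and (iii) the geometric-series inversion of the denominator, while also correctly choosing the branch of $\sqrt{\theta_2^+-\theta_2}$ consistent with the cut plane $\mathbb C\setminus[\theta_2^+,\infty)$ (so that $\sqrt{\theta_2^+-\theta_2}>0$ for $\theta_2$ real and $<\theta_2^+$, which is the relevant direction of approach for the later singularity analysis of the Laplace transform). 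A secondary point worth a sentence is to justify that $w$ extends analytically to all of $\mathbb C\setminus[\theta_2^+,\infty)$ — this follows because $T_a$ is analytic on $\mathbb C\setminus(-\infty,-1]$ and the affine map $u(\theta_2)$ sends $[\theta_2^+,\infty)$ onto $(-\infty,-1]$ — so that the only singularity of $\phi_1$ in that cut plane, besides the possible pole at a zero of $w(\theta_2)-w(0)$, is the algebraic one at $\theta_2^+$ just analysed. Everything else is a routine Taylor/Puiseux expansion.
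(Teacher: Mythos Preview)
Your proposal is correct and follows essentially the same route as the paper: the paper's proof is a one-line reference to Lemma~\ref{lem:Chebyshev_singularity} (the expansion $T_a(x)=\cos(a\pi)+a\sqrt{2}\sin(a\pi)\sqrt{x+1}+O(x+1)$), followed by ``doing an expansion of $\phi_1$ at $\theta_2^+$'', which is exactly the Puiseux computation you spell out. One bookkeeping remark: your displayed intermediate value of $K$ has an extra factor of $2$ (the correct value is $K=\tfrac{2\frac{\pi}{\beta}\sin(\frac{\pi}{\beta}\pi)}{\sqrt{\theta_2^+-\theta_2^-}}$, coming from $a\sin(a\pi)\sqrt{2\varepsilon}$ with $\varepsilon=\tfrac{2(\theta_2^+-\theta_2)}{\theta_2^+-\theta_2^-}$), but you already flagged this and your final expressions for $C_1$ and $C_2$ are consistent with the right $K$ and match~\eqref{eq:expression_C_1_C_2}.
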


\begin{proof}
 Lemma~\ref{lem:singularities_1} follows from doing an expansion of $\phi_1$ at $\theta_2^+$, using (the elementary) Lemma~\ref{lem:Chebyshev_singularity} below.
\end{proof}

\begin{lem}
\label{lem:Chebyshev_singularity}
If $a$ is an integer, the generalized Chebyshev polynomial $T_a$ is the classical Chebyshev polynomial. If not, $T_a$ is not a polynomial, and admits an analytic extension on $\mathbb{C}\setminus (-\infty,-1]$. The point $-1$ is an algebraic-type singularity, and there is the expansion
\begin{equation}
\label{eq:expansion_Chebyshev_1}
     T_a(x)=\cos(a\pi)+a\sqrt{2}\sin(a\pi)\sqrt{x+1}+O({x+1}).
\end{equation}
\end{lem}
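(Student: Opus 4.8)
The plan is to prove Lemma~\ref{lem:Chebyshev_singularity} by reducing everything to the behaviour of the elementary function $z\mapsto\sqrt{z}$ near $z=0$. First I would recall the explicit formula
\begin{equation*}
     T_a(x)=\frac12\Bigl\{\bigl(x+\sqrt{x^2-1}\bigr)^a+\bigl(x-\sqrt{x^2-1}\bigr)^a\Bigr\},
\end{equation*}
and observe that for $a\in\mathbb Z$ the two terms are genuine (Laurent) polynomials in $x$ and $\sqrt{x^2-1}$ whose sum is symmetric under $\sqrt{x^2-1}\mapsto-\sqrt{x^2-1}$, hence a polynomial in $x$; matching it with $\cos(a\arccos x)$ on $[-1,1]$ identifies it with the classical Chebyshev polynomial by the addition formula for cosine. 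If $a\notin\mathbb Z$, then $\bigl(x+\sqrt{x^2-1}\bigr)^a$ is unbounded (e.g.\ grows like a non-integer power as $x\to+\infty$), so $T_a$ cannot be a polynomial; and since $x\mapsto x+\sqrt{x^2-1}$ maps $\mathbb C\setminus(-\infty,-1]$ conformally into the right half-plane (more precisely, onto $\mathbb C$ minus the negative reals and $0$), the principal branch of the $a$-th power composes to give an analytic extension of $T_a$ to $\mathbb C\setminus(-\infty,-1]$, while the branch cut $(-\infty,-1]$ of $\sqrt{x^2-1}$ (and the endpoint $x=-1$) remains.

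Next I would carry out the local expansion at $x=-1$. Write $x=-1+\varepsilon$ with $\varepsilon\to 0$. Then $x^2-1=\varepsilon(\varepsilon-2)$, so $\sqrt{x^2-1}=\sqrt{-2\varepsilon}\,\sqrt{1-\varepsilon/2}=\sqrt{-2\varepsilon}\,(1+O(\varepsilon))$, and hence
\begin{equation*}
     x\pm\sqrt{x^2-1}=-1\pm\sqrt{-2\varepsilon}+O(\varepsilon).
\end{equation*}
Raising to the power $a$ gives, using $(-1+u)^a=e^{a\log(-1+u)}=e^{\pm i a\pi}(1\mp u+O(u^2))^a=e^{\pm i a\pi}\bigl(1\mp au+O(u^2)\bigr)$ with $u=\mp\sqrt{-2\varepsilon}+O(\varepsilon)$ (the sign of $i a\pi$ in $(-1)^a$ being fixed consistently by the chosen branch, and the two terms getting conjugate choices so that the final answer is real on $(-1,1)$),
\begin{equation*}
     \bigl(x\pm\sqrt{x^2-1}\bigr)^a=e^{\pm i a\pi}\Bigl(1\mp a\sqrt{-2\varepsilon}+O(\varepsilon)\Bigr).
\end{equation*}
Averaging the two,
\begin{equation*}
     T_a(x)=\frac12\Bigl(e^{ia\pi}+e^{-ia\pi}\Bigr)-\frac{a\sqrt{-2\varepsilon}}{2}\Bigl(e^{ia\pi}-e^{-ia\pi}\Bigr)+O(\varepsilon)
     =\cos(a\pi)+a\sqrt{2}\,\sin(a\pi)\,\sqrt{\varepsilon}+O(\varepsilon),
\end{equation*}
since $-\tfrac12\sqrt{-2\varepsilon}(e^{ia\pi}-e^{-ia\pi})=-\tfrac12\sqrt{2}\,i\sqrt{-\varepsilon}\cdot 2i\sin(a\pi)=\sqrt{2}\sqrt{\varepsilon}\sin(a\pi)$ after absorbing the $i$'s correctly. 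Recalling $\varepsilon=x+1$, this is exactly \eqref{eq:expansion_Chebyshev_1}. The only point requiring care is the bookkeeping of branches: one must choose the branch of $(x\pm\sqrt{x^2-1})^a$ on $\mathbb C\setminus(-\infty,-1]$ that agrees with $\cos(a\arccos x)$ on the segment $(-1,1)$, and then track which square-root branch $\sqrt{-2\varepsilon}$ is induced as $\varepsilon\to 0^+$ from the two sides; doing this consistently is what produces the real coefficient $a\sqrt{2}\sin(a\pi)$ rather than something with a spurious phase.

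The main obstacle, then, is purely this branch-cut bookkeeping at $x=-1$ — verifying that the two summands carry conjugate phases so that their average is real and the $\sqrt{x+1}$ term has the stated coefficient; the algebra of the expansion itself is routine Taylor expansion of $\sqrt{1-\varepsilon/2}$ and $(1+u)^a$. An alternative, perhaps cleaner, route that avoids explicit branch tracking is to use the substitution $x=\cos\theta$ (so $x+1=2\cos^2(\theta/2)$, $x+\sqrt{x^2-1}=e^{i\theta}$ after analytic continuation off the real segment), expand $T_a(\cos\theta)=\cos(a\theta)$ around $\theta=\pi$ by writing $\theta=\pi-\eta$, getting $\cos(a\pi-a\eta)=\cos(a\pi)\cos(a\eta)+\sin(a\pi)\sin(a\eta)=\cos(a\pi)+a\eta\sin(a\pi)+O(\eta^2)$, and then substituting $\eta\sim\sqrt{2(x+1)}$ from $x+1=2\cos^2(\theta/2)=2\sin^2(\eta/2)\cdot$(correction) — in fact $x+1=1+\cos\theta=1-\cos\eta=\eta^2/2+O(\eta^4)$, so $\eta=\sqrt{2}\sqrt{x+1}+O((x+1)^{3/2})$, giving $T_a(x)=\cos(a\pi)+a\sqrt2\sin(a\pi)\sqrt{x+1}+O(x+1)$ directly. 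I would present this second derivation as the proof, as it makes the coefficient manifestly correct.
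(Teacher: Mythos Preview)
Your proposal is correct, and the route you end up recommending---set $x=\cos\theta$, write $T_a(x)=\cos(a\theta)$, expand around $\theta=\pi$ via $\theta=\pi-\eta$ with $x+1=1-\cos\eta=\eta^2/2+O(\eta^4)$---is exactly what the paper does: its entire proof reads ``\eqref{eq:expansion_Chebyshev_1} comes from making an expansion of $T_a(x)=\cos(a\arccos(x))$ in the neighborhood of $-1$.'' Your first approach through the explicit radical formula also works but, as you note, requires careful branch bookkeeping; sticking with the $\arccos$ substitution is the right call and matches the paper.
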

\begin{proof}
The considerations on the algebraic nature of $T_a$ are clear, and \eqref{eq:expansion_Chebyshev_1} comes from making an expansion of $T_a(x)=\cos(a\arccos(x))$ in the neighborhood of $-1$.
\end{proof}

We now turn to the singularities introduced by the denominator of $\phi_1$; these singularities are poles.

\begin{lem}
\label{lem:singularities_2}
There are two cases concerning the poles of $\phi_1$ in the cut plane $\mathbb{C}\setminus [\theta_2^+,\infty)$:
\begin{itemize}
     \item $\Theta_1(\theta_2^+)\leqslant 0$, 
     then $\phi_1$ is analytic on $\mathbb{C}\setminus [\theta_2^+,\infty)$,
     \item $\Theta_1(\theta_2^+)>0$, 
     then $\phi_1$ is meromorphic on $\mathbb{C}\setminus [\theta_2^+,\infty)$, with poles on the segment $[0,\theta_2^+]$ only. The closest pole to the origin is at $-2\frac{\mu_2}{\sigma_{22}}$ and has order one.
\end{itemize}
\end{lem}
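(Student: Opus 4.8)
The plan is to locate the zeros of the denominator $w(\theta_2)-w(0)$ of $\phi_1$ inside the cut plane $\mathbb{C}\setminus[\theta_2^+,\infty)$, and to translate them via the functional equation into conditions on $\Theta_1$. First I would observe that, since $w$ is injective on $\G$ (Lemma~\ref{lem:CGF}), the only zero of $w(\theta_2)-w(0)$ in $\G$ is $\theta_2=0$, which is cancelled by the numerator $\theta_2$ in Theorem~\ref{thm:main}; hence $\phi_1$ has no pole in $\G$. So any pole must lie in $(\mathbb{C}\setminus[\theta_2^+,\infty))\setminus\G$, and by the continuation formula of Lemma~\ref{lem:prolongement}, $\phi_1(\theta_2)=\frac{\gamma_2}{\gamma_1}(\Theta_1^-(\theta_2),\theta_2)\phi_2(\Theta_1^-(\theta_2))=\frac{\Theta_1^-(\theta_2)}{\theta_2}\phi_2(\Theta_1^-(\theta_2))$ there (using $\gamma_1=\theta_1$, $\gamma_2=\theta_2$ for orthogonal reflections). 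Thus outside $\overline{\G}$ the singularities of $\phi_1$ come from the singularities of $\theta_2\mapsto\phi_2(\Theta_1^-(\theta_2))$, i.e.\ from the points where $\Theta_1^-(\theta_2)$ leaves the half-plane $\{\Re<0\}$ and meets a singularity of $\phi_2$.

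Next I would identify these points concretely. The key fact (to be borrowed from the analysis around the Riemann surface in Section~\ref{sec:Riemann_surface}, as the text announces) is that the poles of $\phi_1$ in $\mathbb{C}\setminus[\theta_2^+,\infty)$ are exactly the solutions of $\Theta_1^+(\theta_2)=0$, i.e.\ the preimages under the kernel of the pole of $\phi_2$. Setting $\theta_1=0$ in the kernel $\gamma(\theta_1,\theta_2)=\tfrac12\sigma_{22}\theta_2^2+\mu_2\theta_2=\theta_2(\tfrac12\sigma_{22}\theta_2+\mu_2)$ shows that $\gamma(0,\theta_2)=0$ has roots $\theta_2=0$ and $\theta_2=-2\mu_2/\sigma_{22}>0$; the nonzero root $-2\mu_2/\sigma_{22}$ is the candidate pole, and one checks it is the value of $\theta_2$ for which $\Theta_1^+(\theta_2)=0$. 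Whether this point actually produces a pole of $\phi_1$ (rather than being absorbed or lying in the "wrong" region) is governed by the monotonicity of $\Theta_1^\pm$ along the real axis: the dichotomy is precisely whether $-2\mu_2/\sigma_{22}$ lies before or after $\theta_2^+$, equivalently whether $\Theta_1(\theta_2^+)$ has already crossed $0$ or not. Concretely, $\Theta_1(\theta_2^+)\le 0$ means the branch $\Theta_1$ has not yet reached $0$ when $\theta_2$ reaches $\theta_2^+$, so the solution of $\Theta_1^+(\theta_2)=0$ sits on $[\theta_2^+,\infty)$ (outside the cut plane), giving analyticity; $\Theta_1(\theta_2^+)>0$ means the crossing already occurred on $[0,\theta_2^+]$, putting a simple pole there, and it must be at $-2\mu_2/\sigma_{22}$, which is then necessarily the closest pole to the origin (any other zero of $w(\theta_2)-w(0)$ in $[0,\theta_2^+]$ lies further out, since $w$ is real and monotone on that segment — this is where I would invoke the explicit Chebyshev form of $w$ and the fact that $-\frac{2\theta_2-(\theta_2^++\theta_2^-)}{\theta_2^+-\theta_2^-}$ maps $[0,\theta_2^+]$ into $[-1,1]$ where $T_{\pi/\beta}$ is real).

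Finally I would check the order of the pole: since $\Theta_1^+$ has a simple zero at $-2\mu_2/\sigma_{22}$ (the derivative of the kernel in $\theta_1$ at $(0,-2\mu_2/\sigma_{22})$ is $\sigma_{12}\cdot(-2\mu_2/\sigma_{22})+\mu_1=\Theta_1^+{}'\neq 0$ generically, and in any case $-2\mu_2/\sigma_{22}$ is not a branch point since it is not $\theta_2^\pm$), and $\phi_2$ has a simple pole, the composite $\phi_2(\Theta_1^-(\theta_2))$ — equivalently the zero of $w(\theta_2)-w(0)$ — contributes a pole of order exactly one. The main obstacle is the second paragraph: pinning down rigorously that the poles of $\phi_1$ in the cut plane coincide with the zeros of $\Theta_1^+$ and relating the sign of $\Theta_1(\theta_2^+)$ to the position of $-2\mu_2/\sigma_{22}$ relative to $\theta_2^+$. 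This requires a careful tracking of the two real branches $\Theta_1^\pm$ on $[\theta_2^-,\theta_2^+]$ (they are real and conjugate-valued exactly on that interval, coalescing at the branch points), which the paper defers to the Riemann-surface discussion in Section~\ref{sec:Riemann_surface}; everything else is a routine consequence of injectivity of $w$, the continuation Lemma~\ref{lem:prolongement}, and the one-dimensional computation of the diagonal case already carried out in Section~\ref{subsec:diagonal_covariance}.
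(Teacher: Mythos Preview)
Your plan and the paper's proof diverge in a substantive way. The paper never passes through the continuation formula or through $\phi_2$: since Theorem~\ref{thm:main} already gives $\phi_1(\theta_2)=\frac{-\mu_1 w'(0)\,\theta_2}{w(\theta_2)-w(0)}$, the poles are exactly the zeros of $w(\theta_2)-w(0)$, and the paper locates them by lifting to the sphere. Writing $W(s)=w(\theta_2(s))$ with the explicit uniformisation~\eqref{eq:formulas_uniformization} and using $W(s)=-\tfrac12\{(-s)^{\pi/\beta}+(-s)^{-\pi/\beta}\}$, the equation $W(s)=W(s_0)$ has the complete set of solutions $s=s_0^{\pm1}e^{2ik\beta}$, $k\in\mathbb Z$. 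All of these lie on the unit circle (since $\vert s_0\vert=1$), hence project to real $\theta_2$ in $[\theta_2^-,\theta_2^+]$; the one closest to $s_0$ is $1/s_0$, which satisfies $\theta_1(1/s_0)=\theta_1(s_0)=0$ and therefore corresponds to $\theta_2=-2\mu_2/\sigma_{22}$. Whether $1/s_0$ sits before or after $e^{i\beta}$ on the circle is precisely the sign of $\Theta_1(\theta_2^+)$. That is the whole argument.

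Your route has two problems. First, the detour through Lemma~\ref{lem:prolongement} and $\phi_2$ is circular: to read off poles of $\phi_1$ from $\phi_2(\Theta_1^-(\theta_2))$ you would need the pole structure of $\phi_2$, which is the symmetric question. (Incidentally, with $\gamma_1=\theta_1$ and $\gamma_2=\theta_2$ one has $\gamma_2/\gamma_1=\theta_2/\Theta_1^-(\theta_2)$, not the inverse you wrote.) Second, and more seriously, your ``key fact'' that the poles of $\phi_1$ in the cut plane are \emph{exactly} the solutions of $\Theta_1^+(\theta_2)=0$ is not what the paper proves and is not true in general: the uniformisation shows that $w(\theta_2)=w(0)$ has the whole family $s_0^{\pm1}e^{2ik\beta}$ of solutions, and only $k=0$ gives $\theta_1=0$. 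The other values of $k$ give genuine additional poles on $[0,\theta_2^+]$ when $\Theta_1(\theta_2^+)>0$; your monotonicity sketch does not rule them out and in fact the lemma does not claim the pole is unique, only that $-2\mu_2/\sigma_{22}$ is the closest one. The clean way to get both the location on the real segment and the ordering is the sphere argument above, which replaces all the branch-tracking by the single observation that $\vert s_0\vert=1$.
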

The proof of Lemma~\ref{lem:singularities_2} is postponed to Section~\ref{sec:Riemann_surface}, since the tools that we shall introduce there will simplify it.
Then Proposition~\ref{prop:asymptotics} is an easy consequence of Lemmas~\ref{lem:singularities_1} and~\ref{lem:singularities_2}.



\section{Riemann sphere and related facts}
\label{sec:Riemann_surface}

We have many objectives in this section, which all concern the set of zeros of the kernel
\begin{equation*}
     \s=\{(\theta_1,\theta_2)\in\mathbb{C}^2 : \gamma(\theta_1,\theta_2)=0\}.
\end{equation*}
We add some complexity here, by using the framework of Riemann surfaces. In return, many technical aspects become more intrinsic and some key quantities admit nice and natural interpretations. We first (Section~\ref{subsec:param}) study the structure of $\s$, as a Riemann surface. Then we find a simple formula for the conformal mapping $w$ (Section~\ref{subsec:conformal_mapping}). We finally introduce the notion of group of the model (Section~\ref{subsec:group}), similar to the notion of group of the walk in the discrete setting \cite{Maly,fayolle_random_1999,BMMi-10}, and we prove that the algebraic nature of the solution is related to the finiteness of this group.

\subsection{Uniformisation}
\label{subsec:param}

Due to the degree of the kernel $\gamma$, the surface $\s$ has genus $0$ and is a Riemann sphere $\mathbb{C}\cup\{\infty\}$, see \cite{franceschi_asymptotic_2016}. It thus admits a rational parametrization (or uniformisation) $\{(\theta_1(s),\theta_2(s)): s\in\s\}$, given by
\begin{equation}
\label{eq:formulas_uniformization}
     \left\{\begin{array}{rcl}
     \displaystyle \theta_1(s)&=&\displaystyle\frac{\theta_1^{+}+\theta_1^{-}}{2}+\frac{\theta_1^{+}-\theta_1^{-}}{4} \left(s+\frac{1}{s}\right),\medskip\\
     \displaystyle \theta_2(s)&=&\displaystyle\frac{\theta_2^{+}+\theta_2^{-}}{2}+\frac{\theta_2^{+}-\theta_2^{-}}{4} \left(\frac{s}{e^{i\beta}}+\frac{e^{i\beta}}{s}\right),
     \end{array}\right.
\end{equation}
with $\beta$ as in \eqref{eq:definition_beta}. The equation $\gamma(\theta_1(s),\theta_2(s))=0$ is valid for any $s\in\s$. We will often represent a point $s\in\s$ by the pair of coordinates $(\theta_1(s),\theta_2(s))$. 

Any point $\theta_1\in\mathbb C$ has two images on $\s$. More specifically, let $s\in\s$ be defined by $\theta_1(s)=\theta_1$. Then the two points are given by $(\theta_1(s),\theta_2(s))$ and $(\theta_1(s),\theta_2(1/s))$. They are always different, except if $\theta_1$ is a branch point $\theta_1^\pm$. Similarly, any $\theta_2\in\mathbb C$ corresponds to two points on $\s$, viz, $(\theta_1(s),\theta_2(s))$ and $(\theta_1(e^{2i\beta}/s),\theta_2(s))$.

Any point or domain of $\mathbb C$ can be represented on the Riemann sphere, and there is the following correspondance (which is easily proved by using the formulas \eqref{eq:formulas_uniformization}), see Figure~\ref{fig:correspondance_plane_sphere}: 
\begin{itemize}
     \item The branch points $\theta_1^\pm$ and $\theta_2^\pm$ are located at $\pm 1$ and $\pm e^{i\beta}$;
     \item The point of coordinates $(0,0)$ is at $s_0$, 
      and $\infty$ is at $0$ and $\infty$;
     \item The real points $(\theta_1,\theta_2)\subset \mathbb R^2$ such that $\gamma(\theta_1,\theta_2)=0$ form the unit circle;
     \item The curve $\R$ is sent to $(-\infty,0)$, and if $\theta_2\in \R$ corresponds to $s\in (-\infty,0)$, then $\overline{\theta_2}\in\R$ corresponds to $1/s$; the domain $\G$ is the cone bounded by $(0,-\infty)$ and $(0,-e^{i\beta}\infty)$.
\end{itemize}

\begin{figure}[hbtp]
\centering
\includegraphics[scale=1]{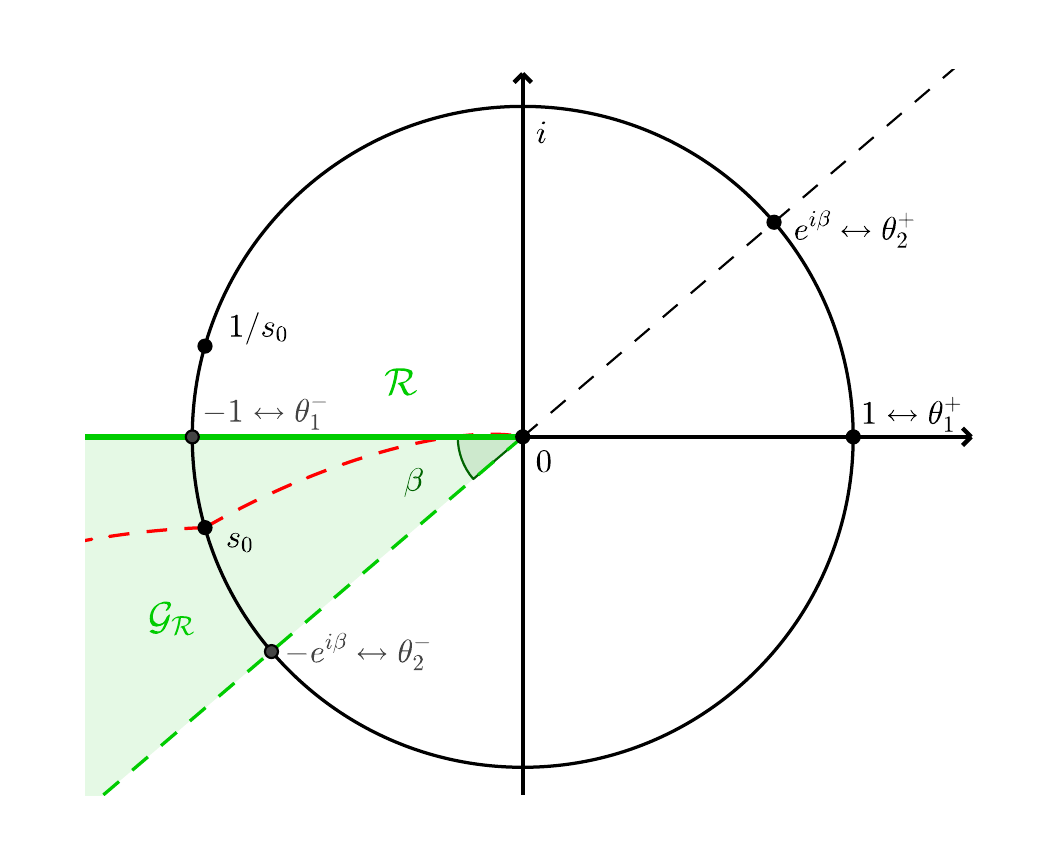}
\caption{The set $\s$ viewed as the Riemann sphere, and important points and curves on it 
}
\label{fig:correspondance_plane_sphere}
\end{figure}


\subsection{Conformal mapping}
\label{subsec:conformal_mapping}

In this section, we first show how to obtain the expression \eqref{eq:expression_w} of $w$ in terms of generalized Chebyshev polynomials, and we prove Lemma~\ref{lem:CGF}. Let us first notice that any function $f(\theta_2)$ can be lifted in a function of $F(s)$, by mean of the formula
\begin{equation*}
     F(s)=f(\theta_2(s)).
\end{equation*}
In particular, $W(s)$ stands for the lifted conformal mapping $w(\theta_2(s))$. Reciprocally, for a function $F(s)$ to define a univalent function $f(\theta_2)$, the condition $F(s)=F({e^{2i\beta}}/{s})$ needs to hold, due to the obvious identity $\theta_2(s)=\theta_2(e^{2i\beta}/{s})$.

\begin{proof}[Proof of Lemma~\ref{lem:CGF}]
We first translate on $W$ the properties of Lemma~\ref{lem:CGF} stated for $w$. First, $W$ has to be analytic in the open cone $\G$ (in $\s$, this is the cone delimitated by $(0,-\infty)$ and $(0,-e^{i\beta}\infty)$) and continuous on the closed cone $\overline{\G}$ except at $0$, see~\ref{item:conformal_1}. Second, $W$ has to be injective on $\G$, see~\ref{item:conformal_2}. Finally, the boundary condition~\ref{item:conformal_3} has to be replaced by the pair of conditions
\begin{equation}
\label{eq:boundary_conditions_CGF}
     \left\{\begin{array}{ll}
     W(s)=W({1}/{s}), & \forall s\in\R,\\
     W(s)=W({e^{2i\beta}}/{s}), & \forall s\in(0,-e^{i\beta}\infty).
     \end{array}\right.
\end{equation}
We easily come up with the (at this point: conjectured) formula
\begin{equation}
\label{eq:expression_W}
     W(s)=-\frac{1}{2}\big\{(-s)^{\frac{\pi}{\beta}}+(-s)^{-\frac{\pi}{\beta}}\big\}=-\frac{1}{2}\big\{e^{\frac{\pi}{\beta}\log (-s)}+e^{-\frac{\pi}{\beta}\log (-s)}\big\},
\end{equation}
where we make use of the principal determination of the logarithm. (Since the conditions \eqref{eq:boundary_conditions_CGF} are invariant under multiplication by a constant, we can choose the constant in front of the right-hand side of \eqref{eq:expression_W}. We choose $-1/2$, so as to match the expression $W(s)$ with $w(\theta_2(s))$, $w$ being as in Theorem~\ref{thm:main}: $w(\Theta_2(\theta_1^-))=T_{\frac{\pi}{\beta}} (\cos \beta ) =\cos \pi=-1= W(-1)$.)

Let us briefly verify each property. First, the analyticity property~\ref{item:conformal_1} is clear from the properties of the logarithm. In order to show~\ref{item:conformal_2}, we first remark that $W(s)=W(t)$ if and only if for some $k\in\mathbb Z$,
\begin{equation}
\label{eq:resolution_WW}
     s=t^{\pm1} e^{2ik\beta}.
\end{equation}
Since $s$ and $t$ both belong to the cone $\G$, we must have $s=t$, and therefore $W$ is injective. Finally,~\ref{item:conformal_3} is clear from the construction of $W$.

To obtain the expression of $w$ in terms of the generalized Chebyshev polynomial, we use the fact that $\theta_2(s)=\theta_2$ yields the formula
\begin{equation*}
     s=\frac{e^{i\beta}}{\theta_2^+-\theta_2^-} \left(2\theta_2-(\theta_2^++\theta_2^-)\pm2\sqrt{(\theta_2^+-\theta_2)(\theta_2^--\theta_2)}\right).
\end{equation*}
The proof is complete.
\end{proof}

Thanks to the expression \eqref{eq:expression_W} of $W$ derived above, we can now prove~Lemma~\ref{lem:singularities_2}, concerning the (eventual) poles of $\phi_1$ in the cut plane $\mathbb{C}\setminus [\theta_2^+,\infty)$.

\begin{proof}[Proof of Lemma~\ref{lem:singularities_2}]
Using the expression of $\phi_1$ obtained in Theorem~\ref{thm:main}, we conclude that the poles of $\phi_1$ must be located at points $\theta_2$ where $w(\theta_2)=w(0)$. Using the function $W$ in \eqref{eq:expression_W}, we reformulate the latter identity as $W(s)=W(s_0)$, see Figure~\ref{fig:correspondance_plane_sphere}, and thanks to \eqref{eq:resolution_WW} we obtain $s=(s_0)^{\pm 1} e^{2ik\beta}$. The closest pole to the origin $s_0$ is then ${1}/{s_0}$, which corresponds to the point 
\begin{equation*}
     \theta_2=-2\frac{\mu_2}{\sigma_{22}} 
\end{equation*}
such that $\Theta_1^+(\theta_2)=0$, since $\theta_1(s_0)=\theta_1(1/s_0)=0$. The sign of $\Theta_1(\theta_2^+)$ then determines if ${1}/{s_0}$ is before or after $e^{i\beta}$ (which corresponds to $\theta_2^+$) on the unit circle, see Figure~\ref{fig:correspondance_plane_sphere}. The conditions given in terms of sign of $\Theta_1(\theta_2^+)$ follows. Finally, let us notice that $w(\theta_2^+)=w(0)$ if $\Theta_1(\theta_2^+)=0$ and that $w(\theta_2^+)\neq w(0)$ if $\Theta_1(\theta_2^+)<0$.
\end{proof}

\subsection{Group of the model and nature of the solution}
\label{subsec:group}

The notion of group of the model has been introduced by Malyshev \cite{Maly} in the context of random walks in the quarter plane. It turns out to be an important characteristic of the model, in particular to decide whether generating functions or Laplace transforms are algebraic or D-finite functions, see \cite{BMMi-10}. 

It can be introduced directly on the kernel $\gamma$: with the notation \eqref{eq:definition_a_b_c}, this is the group $\langle\zeta,\eta\rangle$ generated by $\zeta$ and $\eta$, given by 
\begin{equation*}
     \zeta(\theta_1,\theta_2)=\left(\theta_1,\frac{c(\theta_1)}{a(\theta_1)}\frac{1}{\theta_2}\right),\qquad
     \eta(\theta_1,\theta_2)=\left(\frac{\widetilde c(\theta_2)}{\widetilde a(\theta_2)}\frac{1}{\theta_1},\theta_2\right).
\end{equation*}
By construction, the generators satisfy $\gamma(\zeta(\theta_1,\theta_2))=\gamma(\eta(\theta_1,\theta_2))=0$ as soon as $\gamma(\theta_1,\theta_2)=0$. In other words, there are (covering) automorphisms of the surface $\s$. Since $\zeta^2=\eta^2=1$, the group $\langle\zeta,\eta\rangle$ is a dihedral group, which is finite if and only if the element $\zeta\eta$ (or $\eta\zeta$) has finite order. 

With the above definition, it is not clear how to see if the group is finite, nor to see if this has any implication on the problem. In fact, we have:
\begin{prop}
\label{prop:group_finite}
The group $\langle\zeta,\eta\rangle$ is finite if and only if $\pi/\beta\in\mathbb Q$.
\end{prop}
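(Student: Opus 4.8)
The plan is to use the automorphisms $\zeta$ and $\eta$ lifted to the uniformization variable $s$ given in~\eqref{eq:formulas_uniformization}, where their action becomes extremely simple. First I would note that since the covering $s\mapsto(\theta_1(s),\theta_2(s))$ of $\s$ by the Riemann sphere is degree one, the automorphisms $\zeta$ and $\eta$ correspond to Möbius transformations $\widehat\zeta$ and $\widehat\eta$ of the $s$-sphere. The map $\zeta$ fixes $\theta_1$ and swaps the two roots $\theta_2$ of $\gamma(\theta_1,\cdot)=0$; from the discussion after~\eqref{eq:formulas_uniformization}, the two preimages of a given $\theta_1$ are $s$ and $1/s$, so $\widehat\zeta(s)=1/s$. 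Similarly $\eta$ fixes $\theta_2$ and swaps its two preimages $s$ and $e^{2i\beta}/s$, so $\widehat\eta(s)=e^{2i\beta}/s$.

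Next I would compute the composition: $\widehat\eta\circ\widehat\zeta(s)=e^{2i\beta}/(1/s)=e^{2i\beta}s$, so $\eta\zeta$ lifts to the rotation $s\mapsto e^{2i\beta}s$ of the Riemann sphere. The order of this rotation is finite if and only if $e^{2i\beta}$ is a root of unity, i.e.\ if and only if $2\beta/(2\pi)=\beta/\pi\in\mathbb{Q}$, equivalently $\pi/\beta\in\mathbb{Q}$. Since $\langle\zeta,\eta\rangle$ is dihedral (as noted in the text, $\zeta^2=\eta^2=1$) and hence finite if and only if $\zeta\eta$ has finite order, and since the lift $s\mapsto\widehat\zeta(s),\widehat\eta(s)$ is a faithful representation (a nontrivial Möbius transformation cannot be the identity automorphism of $\s$), the group $\langle\zeta,\eta\rangle$ is finite if and only if $s\mapsto e^{2i\beta}s$ has finite order, if and only if $\pi/\beta\in\mathbb{Q}$.

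The only point requiring a little care — and the step I expect to be the main (minor) obstacle — is the faithfulness of the lifting, namely checking that the group generated by $\widehat\zeta$ and $\widehat\eta$ on the $s$-sphere is genuinely isomorphic to $\langle\zeta,\eta\rangle$ and not a proper quotient. This follows because the parametrization $s\mapsto(\theta_1(s),\theta_2(s))$ is a bijection between the Riemann sphere and $\s$ (genus $0$, degree-one uniformization, as recalled in Section~\ref{subsec:param}), so an automorphism of $\s$ is trivial precisely when its lift to the $s$-sphere is the identity; in particular $\eta\zeta$ has the same order as the rotation $s\mapsto e^{2i\beta}s$. One should also dispatch the degenerate possibility $\beta\in\pi\mathbb{Q}$ versus $\beta/\pi$ irrational cleanly: if $\beta/\pi=p/q$ in lowest terms then $e^{2i\beta}$ has order $q/\gcd(q,2)$, which is finite, and conversely if $\beta/\pi\notin\mathbb{Q}$ then no power of $e^{2i\beta}$ equals $1$. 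This gives the claimed equivalence $\langle\zeta,\eta\rangle$ finite $\iff$ $\pi/\beta\in\mathbb{Q}$.
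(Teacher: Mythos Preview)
Your proof is correct and follows exactly the same approach as the paper: lifting $\zeta$ and $\eta$ to the uniformization variable $s$, where they become $s\mapsto 1/s$ and $s\mapsto e^{2i\beta}/s$, so that $\eta\zeta$ is the rotation $s\mapsto e^{2i\beta}s$. The paper in fact gives only this reformulation and declares the proof ``simple''; your additional remarks on faithfulness of the lift (via the fact that the uniformization is a bijection) make explicit a point the paper leaves implicit.
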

The proof of Proposition~\ref{prop:group_finite} is simple, once the elements $\zeta$ and $\eta$ have been reformulated on the sphere $\s$:
\begin{equation*}
     \zeta(s)=\frac{1}{s},\qquad \eta(s)=\frac{e^{2i\beta}}{s}.
\end{equation*}
These transformations leave invariant $\theta_1(s)$ and $\theta_2(s)$, respectively, see \eqref{eq:formulas_uniformization}. In particular, we have the following result (consequence of the Lemma~\ref{lem:nature_solution}), which connects the nature of the solution of the BVP to the finiteness of the group. Such a result holds for discrete walks, see \cite{BMMi-10,BeBMRa_2015}.
\begin{prop}
\label{prop:nature}
The solution $\phi_1$ given in Theorem~\ref{thm:main} is algebraic if and only if the group $\langle\zeta,\eta\rangle$ is finite.
\end{prop}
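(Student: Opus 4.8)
The plan is to establish Proposition~\ref{prop:nature} by combining Proposition~\ref{prop:group_finite} (finiteness of the group is equivalent to $\pi/\beta\in\mathbb{Q}$) with the explicit formula of Theorem~\ref{thm:main}, reducing everything to the algebraic nature of the generalized Chebyshev polynomial $T_{\pi/\beta}$. First I would observe that, by Theorem~\ref{thm:main},
\begin{equation*}
     \phi_1(\theta_2)=\frac{-\mu_1 w'(0)\,\theta_2}{w(\theta_2)-w(0)},\qquad w(\theta_2)=T_{\frac{\pi}{\beta}}\bigl(u(\theta_2)\bigr),
\end{equation*}
where $u(\theta_2)=-\frac{2\theta_2-(\theta_2^++\theta_2^-)}{\theta_2^+-\theta_2^-}$ is an affine (hence algebraic, indeed rational of degree one) function of $\theta_2$ with nonzero leading coefficient. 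Since the class of algebraic functions is a field stable under composition with rational functions, $\phi_1$ is algebraic over $\mathbb{C}(\theta_2)$ if and only if $w$ is, and since $u$ is an invertible affine map, $w=T_{\pi/\beta}\circ u$ is algebraic if and only if $T_{\pi/\beta}$ is algebraic over $\mathbb{C}(x)$. Thus Proposition~\ref{prop:nature} follows from the purely one-variable statement: $T_a$ is algebraic if and only if $a\in\mathbb{Q}$, together with Proposition~\ref{prop:group_finite}.

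Next I would prove that one-variable fact, which is Lemma~\ref{lem:nature_solution} referred to in the excerpt. For the "if" direction, suppose $a=p/q\in\mathbb{Q}$ with $q\geq 1$. Then $T_a(x)=\cos(a\arccos x)$ satisfies $T_q\bigl(T_a(x)\bigr)=T_{qa}(x)=T_p(x)$, the ordinary Chebyshev polynomial of degree $|p|$; hence $y=T_a(x)$ is a root of the polynomial equation $T_q(y)-T_p(x)=0$ with coefficients in $\mathbb{C}[x]$, so $T_a$ is algebraic. (Equivalently, from the closed form $T_a(x)=\tfrac12\{(x+\sqrt{x^2-1})^a+(x-\sqrt{x^2-1})^a\}$, setting $v=(x+\sqrt{x^2-1})^{1/q}$ exhibits $T_a$ as a rational function of the algebraic quantity $v$.) For the "only if" direction, I would argue by contradiction using the singularity at $-1$: from Lemma~\ref{lem:Chebyshev_singularity}, if $a\notin\mathbb{Z}$ then near $x=-1$,
\begin{equation*}
     T_a(x)=\cos(a\pi)+a\sqrt{2}\sin(a\pi)\sqrt{x+1}+O(x+1),
\end{equation*}
and more generally the branch structure of $T_a$ around $x=1$ (where $\arccos$ has a square-root branch point, and $(\,\cdot\,)^a$ then contributes the exponent $a$) produces monodromy of infinite order when $a$ is irrational: continuing $T_a$ around $x=1$ multiplies $x+\sqrt{x^2-1}$ by $e^{\pm 2\pi i}$ is trivial, so instead one tracks the behaviour at $x=\infty$, where $T_a(x)\sim 2^{a-1}x^a$, and an irrational exponent $a$ forces infinitely many branches, contradicting algebraicity (an algebraic function has finitely many branches and only algebraic — i.e. rational-exponent — local expansions at each point of $\mathbb{P}^1$). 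I would phrase this cleanly via Puiseux expansions: an algebraic function admits, at every point, a local expansion in fractional powers with a common denominator; the expansion of $T_a$ at infinity is $2^{a-1}x^a(1+o(1))$, so algebraicity forces $a\in\mathbb{Q}$.

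Finally I would assemble the pieces: by the above, $\phi_1$ is algebraic $\iff$ $T_{\pi/\beta}$ is algebraic $\iff$ $\pi/\beta\in\mathbb{Q}$, and by Proposition~\ref{prop:group_finite} the last condition is equivalent to finiteness of $\langle\zeta,\eta\rangle$, which is exactly the claim. I expect the main obstacle to be the rigorous "only if" direction — ruling out algebraicity of $T_a$ for irrational $a$ — since it requires a genuine argument about branch points or monodromy rather than a formal manipulation; the cleanest route is the Puiseux/asymptotic-exponent argument at $x=\infty$ (or at $x=1$), invoking that an algebraic function over $\mathbb{C}(x)$ has only finitely many determinations and rational local exponents. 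Everything else (the composition-stability of algebraic functions, the affinity of $u$, and the semigroup identity $T_q\circ T_a=T_{qa}$) is routine and can be stated without detailed computation.
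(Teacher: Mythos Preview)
Your proposal is correct and follows exactly the paper's approach: reduce via Theorem~\ref{thm:main} and Proposition~\ref{prop:group_finite} to the algebraicity of $T_{\pi/\beta}$, then invoke Lemma~\ref{lem:nature_solution}. You in fact go further than the paper, which simply states Lemma~\ref{lem:nature_solution} as ``elementary'' without proof; your argument for it (the identity $T_q\circ T_{p/q}=T_p$ for the rational case, and the Puiseux exponent $T_a(x)\sim 2^{a-1}x^a$ at infinity for the irrational case) is sound, though the aside about monodromy near $x=1$ is unnecessary and can be dropped.
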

The proof of Proposition~\ref{prop:nature} builds on the following elementary result:
\begin{lem} 
\label{lem:nature_solution}
Let $a\geq0$. The generalized Chebyshev polynomial $T_a$ is
\begin{itemize}
     \item rational if $a\in\mathbb Z$,
     \item algebraic (and not polynomial) if $a\in \mathbb{Q}\setminus\mathbb{Z}$,
     \item not algebraic if $a\in \mathbb{R}\setminus \mathbb{Q}$.
\end{itemize}
\end{lem}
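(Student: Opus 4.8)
The plan is to analyze the three regimes for $T_a$ directly from its closed-form expression $T_a(x)=\frac12\{(x+\sqrt{x^2-1})^a+(x-\sqrt{x^2-1})^a\}$. The integer case is immediate: for $a\in\mathbb{Z}_{\geq 0}$, $T_a$ is the ordinary Chebyshev polynomial, hence rational (indeed polynomial), which is the content of the first part of Lemma~\ref{lem:Chebyshev_singularity} already recalled in the excerpt. So the work is in the two remaining cases.

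For the case $a\in\mathbb{Q}\setminus\mathbb{Z}$, write $a=p/q$ in lowest terms with $q\geq 2$. First I would introduce the algebraic substitution $x=\frac12(u+1/u)$ (equivalently $u=x+\sqrt{x^2-1}$), under which $T_a(x)=\frac12(u^{p/q}+u^{-p/q})$. Setting $v=u^{1/q}$, we have that $v$ is algebraic over $\mathbb{C}(x)$ (it satisfies $v^{2q}-2xv^q+1=0$), and $2T_a(x)=v^p+v^{-p}$ is a rational function of $v$; hence $y=T_a(x)$ satisfies a polynomial equation over $\mathbb{C}(x)$ obtained by eliminating $v$ from $v^{2q}-2xv^q+1=0$ and $v^{2p}-2yv^p+1=0$ (a resultant computation). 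This shows $T_a$ is algebraic. To see it is \emph{not} a polynomial (nor even rational) when $q\geq 2$, I would invoke Lemma~\ref{lem:Chebyshev_singularity}: $T_a$ has a genuine algebraic branch point at $x=-1$, since the expansion $T_a(x)=\cos(a\pi)+a\sqrt2\sin(a\pi)\sqrt{x+1}+O(x+1)$ has a nonzero $\sqrt{x+1}$-coefficient precisely because $\sin(a\pi)\neq 0$ when $a\notin\mathbb{Z}$. A rational function has no branch points, so $T_a$ is not rational; this simultaneously disposes of the "and not polynomial" clause.

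For the case $a\in\mathbb{R}\setminus\mathbb{Q}$, I would argue that $T_a$ is not algebraic by showing it has infinitely many branch points (an algebraic function has only finitely many). The function $x\mapsto(x+\sqrt{x^2-1})^a=e^{a\log u}$, continued around the point $x=1$ (equivalently $u=1$), is multiplied by $e^{2\pi i a}$ after each loop, and since $a$ is irrational the orbit $\{e^{2\pi i k a}:k\in\mathbb{Z}\}$ is infinite; so analytic continuation of $T_a$ along loops around $x=1$ produces infinitely many distinct branches. Alternatively, and perhaps more cleanly, one can note that if $T_a$ were algebraic of degree $d$, then restricting to a suitable real interval and using the recurrence/addition structure $T_a(T_b(x))=T_{ab}(x)$ and $2T_a(x)T_b(x)=T_{a+b}(x)+T_{|a-b|}(x)$ forces infinitely many algebraically independent-looking constraints; but the branch-point argument is shortest and self-contained. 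Either way, no finite collection of branch points can accommodate the infinite monodromy, so $T_a$ is transcendental over $\mathbb{C}(x)$.

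The main obstacle is making the irrational case fully rigorous: "infinitely many branch points" must be pinned down as an honest obstruction to algebraicity, which means recalling that the Riemann surface of an algebraic function over $\mathbb{C}(x)$ is a finite branched cover of $\mathbb{P}^1$ and therefore has a finite branch locus and finite monodromy group — then exhibiting the infinite monodromy of $T_a$ at $x=1$. The rest (integer and rational cases) is routine once the substitution $u=x+\sqrt{x^2-1}$ is in place, and the non-polynomiality in the rational case is handed to us for free by the branch-point expansion of Lemma~\ref{lem:Chebyshev_singularity}.
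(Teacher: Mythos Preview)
The paper does not actually prove this lemma; it is stated as an ``elementary result'' and left to the reader. So there is no paper proof to compare against, and your task is simply to make the argument watertight. The integer and rational cases are fine: the substitution $u=x+\sqrt{x^2-1}$, $v=u^{1/q}$ gives algebraicity cleanly, and invoking the $\sqrt{x+1}$-term in Lemma~\ref{lem:Chebyshev_singularity} (whose coefficient $a\sqrt{2}\sin(a\pi)$ is nonzero for $a\notin\mathbb Z$) correctly rules out rationality.

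The irrational case, however, contains a concrete error. You claim that continuing $(x+\sqrt{x^2-1})^a$ around $x=1$ multiplies it by $e^{2\pi i a}$, and hence that $T_a$ acquires infinitely many branches by looping around $x=1$. This is not what happens. A small loop around $x=1$ sends $\sqrt{x^2-1}\mapsto-\sqrt{x^2-1}$, i.e.\ $u\mapsto 1/u$, and since $u$ stays in a small neighbourhood of $1$ the logarithm is tracked honestly as $\log u\mapsto-\log u$. Thus $u^a\mapsto u^{-a}$ and $T_a=\tfrac12(u^a+u^{-a})$ is \emph{invariant} under monodromy at $x=1$: no new branches are produced there. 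The infinite monodromy you want lives instead at $x=\infty$ (equivalently, along a loop encircling both $\pm1$): there $u$ winds once around $0$, so $\log u\mapsto\log u+2\pi i$ and $T_a\mapsto\tfrac12(u^ae^{2\pi i a}+u^{-a}e^{-2\pi i a})$, which for irrational $a$ generates infinitely many distinct branches. With this correction your monodromy argument goes through: an algebraic function over $\mathbb C(x)$ has a finite monodromy group, so infinite monodromy at $\infty$ forces transcendence. Alternatively, and perhaps more robustly, you can argue directly that $\cos\theta$ and $\cos(a\theta)$ are algebraically independent over $\mathbb C$ when $a\notin\mathbb Q$ (fix $\theta_0$ with $e^{i\theta_0}$ not a root of unity; then the values $e^{ia(\theta_0+2\pi k)}$, $k\in\mathbb Z$, are pairwise distinct, so any polynomial relation $P(\cos\theta,\cos(a\theta))=0$ would force $P(\cos\theta_0,\cdot)$ to have infinitely many roots).
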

However, even in the non-algebraic case, the Chebyshev polynomial $T_a$ always satisfies a linear differential equation with coefficients in $\mathbb R$, since it can be written as the particular hypergeometric function ${}_2F_1([-a,a],[1/2],1-x)$, where (with $(A)_k=A(A+1)\cdots (A+(k-1))$)
\begin{equation*}
     {}_2F_1([A,B],[C],t) = \sum_{k=0}^{\infty}\frac{(A)_k (B)_k}{(C)_k}\frac{t^k}{n!}.
\end{equation*}


\nocite{BuChMaRa_2015,Foschini,EA-AofA2016}
\bibliographystyle{apalike} 
\bibliography{biblio} 

\end{document}